\newtheorem{theorem}{Theorem}[section]
\newtheorem{lemma}[theorem]{Lemma}
\newtheorem{corollary}[theorem]{Corollary}
\newtheorem{definition}[theorem]{Definition}
\newtheorem{question}[theorem]{Question}
\newcommand{\Ind}{\textup{Ind}}
\newcommand{\Cop}{\textup{Cop}}
\newcommand{\I}{[0,1]}
\newcommand{\pa}[1]{d(\overline{K_k};#1)}
\newcommand{\pc}[1]{d(K_k;#1)}
\begin{document}

\title{On the densities of cliques and independent sets in graphs}

\author{Hao Huang\thanks{
School of Mathematics, Institute for Advanced Study, Princeton 08540. Email: {\tt huanghao@math.ias.edu}. Research supported in part by NSF grant DMS-1128155.
}
\and Nati Linial\thanks{School of Computer Science and engineering, The Hebrew University of Jerusalem, Jerusalem 91904, Israel. Email: {\tt nati@cs.huji.ac.il}. Research supported in
part by the Israel Science Foundation and by a USA-Israel BSF grant.}
\and Humberto Naves\thanks{Department of Mathematics, UCLA, Los Angeles, CA 90095. Email: {\tt hnaves@math.ucla.edu}.}
\and Yuval Peled\thanks{School of Computer Science and engineering, The Hebrew University of Jerusalem, Jerusalem 91904, Israel. Email: {\tt yuvalp@cs.huji.ac.il}}
\and Benny Sudakov\thanks{Department of Mathematics, ETH, 8092 Zurich, Switzerland and Department of Mathematics, UCLA, Los Angeles, CA 90095.
Email: bsudakov@math.ucla.edu. Research supported in part by SNSF grant 200021-149111 and by a USA-Israel BSF grant.}}

\date{}

\maketitle

\setcounter{page}{1}

\vspace{-2em}
\begin{abstract}
Let $r, s \ge 2$ be integers. Suppose that the number of blue $r$-cliques in a red/blue coloring of the edges of the complete graph $K_n$ is known and fixed. What is
the largest possible number of red $s$-cliques under this assumption? The well known Kruskal-Katona theorem answers this question for $r=2$ or $s=2$.
Using the shifting technique from extremal set theory together with some analytical arguments, we resolve this problem in general and prove that in the extremal
coloring either the blue edges or the red edges form a clique.
\end{abstract}

\section{Introduction} \label{section_introduction}

As usual we denote by $K_s$ the complete graph on $s$ vertices and by $\overline{K}_s$ its complement, the edgeless graph on $s$ vertices. By the celebrated Ramsey's
theorem, for every two integers $r, s$ every sufficiently large graph  must contain $K_r$ or $\overline{K}_s$. Tur\'an's theorem can be viewed as a
quantitative version of the case $s=2$. Namely, it shows that among all $\overline{K}_r$-free $n$-vertex graphs, the graph with the least number of
$K_2$ (edges) is a disjoint union of $r-1$ cliques of nearly equal size.
More generally, one can ask the following question. Fix two graphs $H_1$ and $H_2$, and suppose that we know the number of induced copies of
$H_1$ in an $n$-vertex graph $G$. What is the maximum (or minimum) number of induced copies of $H_2$ in $G$? In its full generality, this problem seems currently out
of reach, but some special cases already have important implications in combinatorics, as well as other branches of mathematics and computer science.

To state these classical results, we introduce some notation. Adjacency between vertices $u$ and $v$ is denoted by $u \sim v$, and the neighbor set of $v$ is denoted
by $N(v)$. If necessary, we add a subscript $G$ to indicate the relevant graph. The collection of induced copies of a $k$-vertex graph $H$ in an $n$-vertex graph $G$
is denoted by $\Ind(H; G)$, i.e.
$$\Ind(H; G) := \{X \subseteq V(G): G[X] \simeq H\}$$ and the \textit{induced $H$-density} is defined as$$d(H; G) := \frac{|\Ind(H;
G)|}{\binom{n}{k}}.$$
In this language, Tur\'an's theorem says that if $d(K_r;G)=0$ then  $d(K_2; G)\le 1-\frac{1}{r-1}$ and this bound is tight.
For a general graph $H$, Erd\H{o}s and Stone
\cite{erdos-stone} determined $\max d(K_2; G)$ when $d(H;G)=0$ and showed that the answer depends only on the chromatic number of $H$. Zykov \cite{zykov}
extended Tur\'an's theorem in a different direction. Given integers $2 \le r<s$, he proved that if $d(K_s;G)=0$ then $d(K_r; G) \le \frac{(s-1) \cdots
(s-r)}{(s-1)^r}$. The balanced complete $(s-1)$-partite graphs show that this bound is also tight.

For fixed integers $r<s$, the Kruskal-Katona theorem \cite{katona, kruskal} states that if $d(K_r; G)=\alpha$ then  $d(K_s; G) \le \alpha^{s/r}$. Again, the
bound is tight and is attained when $G$ is a clique on some subset of the vertices. On the other hand, the problem of {\em minimizing} $d(K_s; G)$ under the same
assumption is much more difficult. Even the case $r=2$ and $s=3$ has remained unsolved for many years until it was recently answered by Razborov \cite{razborov} using
his newly-developed flag algebra method. Subsequently, Nikiforov \cite{nikiforov} and Reiher \cite{reiher} applied complicated analytical techniques to solve the
cases $(r,s)=(2,4)$, and ($r=2$, arbitrary $s$), respectively.

In this paper, we study the following natural analogue of the Kruskal-Katona theorem. Given $d(\overline{K}_r; G)$, how large can $d(K_s; G)$ be? For integers $a \ge b > 0$ we let
$Q_{a,b}$ be the $a$-vertex graph whose edge set is a clique on some $b$ vertices. The complement of this graph is denoted by $\overline Q_{a,b}$. Let $\mathcal{Q}_a$ denote the family
of all graphs $Q_{a,b}$ and its complement $\overline Q_{a,b}$ for $0 < b \le a$. Note that for $r=2$ or $s=2$, the Kruskal-Katona theorem implies that the extremal graph comes from $\mathcal{Q}_n$. Our first theorem shows that a similar statement holds for all $r$ and $s$.

\begin{theorem}
\label{maintheorem}
Let $r, s \ge 2$ be integers and suppose that $d(\overline{K}_r; G) \geq p$ where $G$ is an $n$-vertex graph and $0 \le p \le 1$. Let $q$ be the unique root
of $q^r+rq^{r-1}(1-q)=p$ in $[0,1]$. Then $d(K_s;G) \le M_{r,s,p} + o(1)$, where
\[
M_{r,s,p} := \max \{(1-p^{1/r})^s + sp^{1/r}(1-p^{1/r})^{s-1}, (1-q)^s\}.
\]
Namely, given $d(\overline{K}_r; G)$, the maximum of $d(K_s; G)$ (up to $\pm o_n(1)$) is attained in one of two graphs, (or both), one of the form $Q_{n,t}$ and another $\overline Q_{n,t'}$.
\end{theorem}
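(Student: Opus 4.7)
The proof combines the shifting technique from extremal set theory with a variational analysis in the graphon limit, and proceeds in three stages.

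\emph{Stage 1 (Shifting).} For $1 \le i < j \le n$, define the shift $S_{ij}(G)$ by replacing each edge $\{j,k\}$ (with $k \notin \{i,j\}$) by $\{i,k\}$ whenever $\{j,k\} \in E(G)$ and $\{i,k\} \notin E(G)$. Partitioning the $s$-cliques of $G$ by their intersection with $\{i,j\}$ (empty, $\{i\}$ only, $\{j\}$ only, both), a short case analysis shows $|\Ind(K_s; S_{ij}(G))| \ge |\Ind(K_s; G)|$. Since $S_{ij}$ on $G$ coincides with the ``reverse shift'' $S_{ji}$ on the complement $\bar{G}$, the same argument applied to $\bar{G}$ yields $|\Ind(\overline{K}_r; S_{ij}(G))| \ge |\Ind(\overline{K}_r; G)|$. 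Iterating produces a shifted graph $G^*$ with $d(K_s; G^*) \ge d(K_s; G)$ and $d(\overline{K}_r; G^*) \ge p$. A quick calculus check shows $M_{r,s,p}$ is monotone decreasing in $p$, so it suffices to prove the bound for shifted $G$.

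\emph{Stage 2 (Graphon limit and density formulas).} A graph is shifted iff its adjacency matrix is monotone: $A_{ij}=1$ and $(i',j') \le (i,j)$ componentwise with $i' \ne j'$ force $A_{i'j'}=1$. As $n \to \infty$, shifted graphs converge in cut distance to $\{0,1\}$-valued symmetric graphons $W = \mathbbm{1}_S$ for $S \subseteq [0,1]^2$ a symmetric down-set, determined by a non-increasing $\sigma \colon [0,1] \to [0,1]$ via $W(x,y) = \mathbbm{1}_{y \le \sigma(x)}$ (with the symmetry relation $y \le \sigma(x) \Leftrightarrow x \le \sigma(y)$) and a balance point $\alpha$ where $\sigma$ crosses the diagonal. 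A short computation using order statistics of $s$ (resp.\ $r$) i.i.d.\ uniform points yields
\[
d(K_s; W) = \alpha^s + s(s-1) \int_0^{\alpha} u^{s-2}(\sigma(u) - \alpha)\, du, \quad d(\overline{K}_r; W) = (1-\alpha)^r + r \int_0^{\alpha} (1 - \sigma(u))^{r-1}\, du.
\]
The two extremal graphs of the theorem correspond to simple step $\sigma$'s: $Q_{n,t}$ to $\sigma \equiv \alpha$ on $[0,\alpha]$ (with $\alpha = 1-q$), giving density $(1-q)^s$; and $\overline{Q}_{n,t'}$ to $\sigma \equiv 1$ on $[0,\alpha)$ (with $\alpha = 1-p^{1/r}$), giving density $(1-p^{1/r})^s + s p^{1/r}(1-p^{1/r})^{s-1}$.

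\emph{Stage 3 (Variational optimization and conclusion).} It remains to maximize $d(K_s; W)$ over admissible $(\alpha,\sigma)$ subject to $d(\overline{K}_r; W) = p$, respecting the symmetry-involution constraint on $\sigma$. A Lagrange multiplier analysis, combined with the monotonicity constraint, shortlists the candidate optimizers to (i) the two pure configurations $Q_{n,t}$ and $\overline{Q}_{n,t'}$, and (ii) possible intermediate step-function interpolations. The analytic heart of the proof then is to rule out (ii) and show the supremum is attained at one of the two candidates in (i). Taking the maximum of the two resulting values yields $M_{r,s,p}$; lifting the bound from the graphon back to finite $n$-vertex graphs contributes the $o(1)$ error.

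\emph{Main obstacle.} The crux lies in Stage 3: proving that no interior or intermediate symmetric step-function $\sigma$ strictly beats both boundary configurations. A first-order Lagrangian analysis only produces smooth critical points (typically power laws like $\sigma(u)=1-cu^{(s-2)/(r-2)}$), which need not even be feasible once the symmetry-involution constraint is imposed; one must then argue, via either a second-order Hessian computation on the constraint surface or a direct convexity/concavity argument for the value function along a one-parameter family interpolating between $\alpha = 1-q$ and $\alpha = 1-p^{1/r}$, that the maximum must occur at an endpoint. This is precisely where the ``analytical arguments'' invoked in the abstract do the decisive work.
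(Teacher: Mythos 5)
Your Stage 1 (shifting to threshold graphs) matches the paper's reduction, and your Stage 2 density formulas are correct: substituting $\sigma(u) = \alpha + (1-\alpha)f(u/\alpha)$ turns your integrals into exactly the expressions the paper derives for $\pc{G_{x,f}}$ and $\pa{G_{x,f}}$ in Section 5. However, your proposal does not constitute a proof, because Stage 3 — the part you yourself label ``the crux'' and ``the analytic heart'' — is not carried out. You identify the problem (first-order Lagrangian analysis produces power-law critical points that may be infeasible; one must rule out interior/intermediate optima by a second-order argument or a convexity argument), but you do not supply any such argument. That is precisely where the entire difficulty of the theorem lies, and acknowledging the obstacle is not the same as overcoming it.

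For comparison, the paper's main proof (Section 3) avoids the infinite-dimensional variational problem entirely by exploiting a finite parameterization of threshold graphs: a threshold graph on $n$ vertices corresponds to a point $(\mathbf{x},\mathbf{y}) \in W_k$ recording the relative sizes of the alternating ``join-a-clique'' / ``join-nothing'' blocks, and $d(K_s)$, $d(\overline K_r)$ become explicit homogeneous polynomials $p(\mathbf{x},\mathbf{y})$, $q(\mathbf{x},\mathbf{y})$. The claim that the optimum is a clique or complement of a clique reduces to showing that a non-degenerate maximizer of $\min\{a p, b q\}$ has $x_3 = y_2 = x_2 = 0$ (when $x_1 > 0$). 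This is established by three finite-dimensional perturbation lemmas: Lemma~\ref{lemma_small0} uses a positive-definite Hessian $\mathbf{B}$ to rule out $x_3 > 0$, and Lemmas~\ref{lemma_small1} and~\ref{lemma_small2} use a $2\times 2$ rank condition on a matrix $\mathbf{M}$ together with an elementary inequality (Lemma~\ref{lemma_ineq}, proved via $e^t$ estimates) to rule out $y_2 > 0$ and $x_2 > 0$. Your graphon/$\sigma$-formulation more closely parallels the paper's Section~\ref{section_shift}, which the authors sketch only for $r=s$ and whose key inequality (Lemma~\ref{lemma_integrals}) they likewise do not prove in full. If you want to complete your argument along the lines you propose, you would need to actually prove a rearrangement/extremality statement analogous to Lemma~\ref{lemma_integrals} (non-increasing $\sigma$ is dominated by either a constant or a $\{0,1\}$-valued step), and then a one-variable optimization over the balance point $\alpha$ — neither of which is present. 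As it stands, the proposal correctly sets up the problem but omits the substantive analytic content.

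One minor additional remark: your appeal to ``$M_{r,s,p}$ monotone decreasing in $p$'' in Stage 1 is unnecessary and slightly misdirected. Shifting gives $d(K_s;G^*) \ge d(K_s;G)$ and $d(\overline{K}_r;G^*) \ge d(\overline{K}_r;G) \ge p$, so $G^*$ still satisfies the hypothesis of the theorem with the same $p$; no monotonicity of the bound is needed to transfer the conclusion from $G^*$ back to $G$.
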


We obtain as well a {\em stability version} of Theorem \ref{maintheorem}. Two $n$-vertex graphs $H$ and $G$ are \textit{$\epsilon$-close} if it is possible to obtain $H$ from $G$ by adding or deleting at most $\epsilon n^2$ edges. As the next theorem shows, every near-extremal
graph $G$ for Theorem \ref{maintheorem} is $\epsilon$-close to a specific member of $\mathcal{Q}_n$.

\begin{theorem}
\label{stabilitytheorem}
Let $r, s \ge 2$ be integers and let $p \in [0,1]$. For every $\epsilon > 0$, there exists $\delta > 0$ and an integer $N$ such that every $n$-vertex graph $G$ with $n > N$ satisfying $d(\overline{K}_r;G) \ge p$ and
$|d(K_s;G) - M_{r,s,p}| \le \delta$, is $\epsilon$-close to some graph in $\mathcal{Q}_n$.
\end{theorem}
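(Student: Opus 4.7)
\textbf{Proof proposal for Theorem \ref{stabilitytheorem}.}
The plan is a compactness argument via the theory of graph limits. Suppose the statement is false. Then there exist $\epsilon_0 > 0$ and a sequence of graphs $G_m$ on $n_m \to \infty$ vertices such that $d(\overline{K}_r; G_m) \geq p$ and $d(K_s; G_m) \to M_{r,s,p}$, yet each $G_m$ has edit distance greater than $\epsilon_0 n_m^2$ from every member of $\mathcal{Q}_{n_m}$. By the Lov\'asz--Szegedy compactness theorem, after passing to a subsequence we may assume $G_m$ converges in the cut distance to a graphon $W\colon [0,1]^2 \to [0,1]$. Since induced subgraph densities are continuous in the cut distance, the limit satisfies $t_{\mathrm{ind}}(\overline{K}_r, W) \geq p$ and $t_{\mathrm{ind}}(K_s, W) = M_{r,s,p}$.

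The central task is to classify all extremal graphons under these constraints. I would aim to show that, up to a measure-preserving rearrangement of $[0,1]$, $W$ must equal either $\mathbbm{1}_{[0,a]^2}$ (the limit of $Q_{n,t}$) or $1 - \mathbbm{1}_{[0,b]^2}$ (the limit of $\overline{Q}_{n,t'}$) for an appropriate $a$ or $b$. To do this I would revisit the proof of Theorem \ref{maintheorem} --- which, per the abstract, proceeds via shifting together with analytic optimization --- and track the equality cases of every inequality used. The shifting reductions should transfer almost verbatim to the graphon setting; the remaining analytic inequalities (e.g.\ H\"older, Jensen, or AM-GM applied to symmetric functions of the ``degree function'' $d_W(x) = \int W(x,y)\,dy$) should be tight only on $\{0,1\}$-valued step graphons whose support is a sub-square. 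The two terms inside the maximum defining $M_{r,s,p}$ correspond exactly to the two candidate extremal structures, so the classification splits according to which term attains the max.

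Once the classification is in hand, the final step returns from graphons to graphs. Because the extremal $W$ is $\{0,1\}$-valued with a simple two-block structure, convergence of $G_m$ to $W$ in cut distance upgrades, after a suitable relabeling of vertices, to convergence in edit distance to the corresponding graph $Q_{n_m,t_m}$ or $\overline{Q}_{n_m,t_m'} \in \mathcal{Q}_{n_m}$. Quantitatively, one partitions $V(G_m)$ according to the two blocks of $W$ and uses the cut-norm bound to conclude that the edge counts within and across the blocks differ from those of the matching element of $\mathcal{Q}_{n_m}$ by $o(n_m^2)$; this provides $\epsilon$-closeness and contradicts the choice of $G_m$.

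The main obstacle I anticipate is the classification step. It is one thing to exhibit two explicit extremizers in $\mathcal{Q}_n$, and quite another to \emph{rule out} every other graphon (including convex combinations, more refined step graphons, or fractional perturbations) as an extremizer. This will require a careful strict-inequality analysis of each step in the proof of Theorem \ref{maintheorem}, and may need to be handled separately in the two parameter regimes according to which term of $M_{r,s,p}$ is dominant, with some extra care at the boundary where the two terms coincide and both types of extremal graphs must simultaneously be accommodated.
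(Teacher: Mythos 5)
Your high-level strategy (compactness via graphons, then classify extremal graphons, then pass back to graphs) is a legitimate template for stability results, and the final passage from cut-distance to edit-distance is indeed sound for $\{0,1\}$-valued step limits. However, there is a genuine gap in the classification step, and it is more serious than you indicate. The proof of Theorem~\ref{maintheorem} begins by applying shifting (Corollary~\ref{corollary_increase}) to replace an arbitrary near-extremal $G$ by a threshold graph $G_1$, and then analyzes only threshold graphs. This shows that \emph{some} extremizer lies in $\mathcal{Q}_n$, but it says nothing, by itself, about the structure of a near-extremal $G$ that is not already shifted: many different graphs shift to the same threshold graph, and the shifting map only preserves edge density, not the full structure of $G$. ``Tracking equality cases'' in the analysis of $\varphi$ on $W_k$ therefore only rules out non-extremal \emph{threshold} graphons; it does not control a graphon that is far from a step graphon but shares the right $\overline{K}_r$- and $K_s$-densities in the limit. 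The paper closes exactly this gap with an extra ingredient your sketch is missing: since shifting preserves edge density, a near-extremal $G$ has edge density close to that of a clique $Q_{n,t}$ (or its complement) while its $K_s$-density is close to the Kruskal--Katona optimum for that edge density, so Keevash's stability version of the Kruskal--Katona theorem forces $G$ itself to be edit-close to $Q_{n,t}$.

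By contrast, the paper's argument avoids graphons entirely: it handles threshold graphs directly via the finite-dimensional optimization over $W_k$ (where the extremizers are literally two points in a compact set, giving stability for free by continuity), and then bootstraps to general $G$ through the edge-density-preservation of shifting plus Keevash's theorem. Your route would work if you could classify \emph{all} extremal graphons---not only step graphons corresponding to threshold graphs---but that classification is essentially an independent theorem, and neither shifting nor the equality analysis in the paper's Lemmas~\ref{lemma_small0}--\ref{lemma_small2} delivers it without the Kruskal--Katona stability input (or some replacement for it).
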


\begin{figure}[h!]
  \centering
    \includegraphics[width=0.5\textwidth]{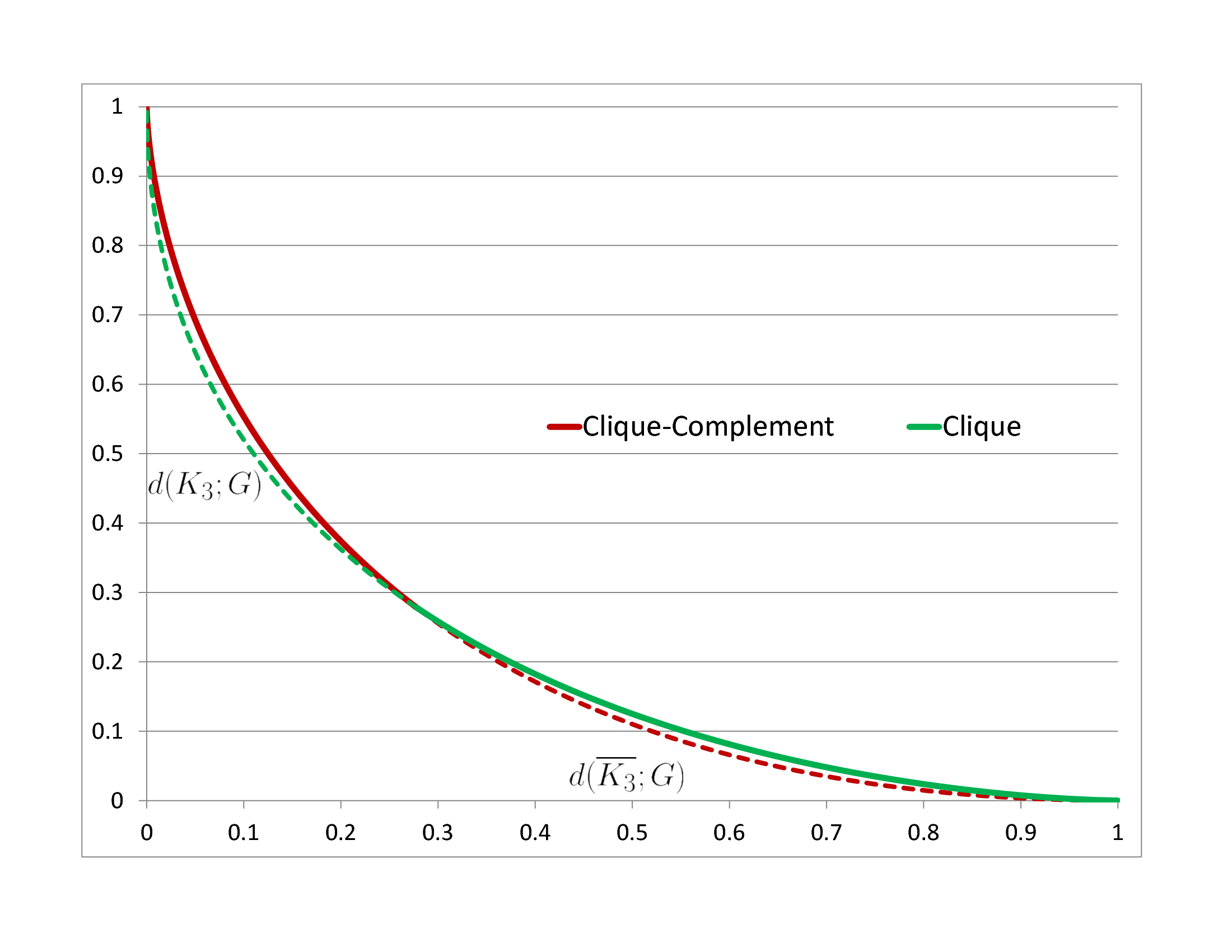}
    \caption{Illustration for the case $r=s=3$. The green curve is $(d(\overline K_3;Q_{n,\theta n}), d(K_3;Q_{n,\theta n}))$ for $\theta\in\I$, and the red  curve defined the same with $\overline Q_{n,\theta n}$. The maximum between the curves is the extremal function in Theorem \ref{maintheorem}. The intersection of the curves represents the solution of the max-min problem in Theorem~\ref{thm:max_min}}
\end{figure}

Rather than talking about an $n$-vertex graph and its complement, we can consider a two-edge-coloring of $K_n$. A quantitative version of Ramsey Theorem
asks for the minimum number of monochromatic $s$-cliques over all such colorings. Goodman \cite{goodman} showed that for $r=s=3$, the optimal
answer is essentially given by a random two-coloring of $E(K_n)$. In other words, $\min_G d(K_3; G) + d(\overline{K}_3; G) = 1/4-o(1)$. Erd\H{o}s \cite{erdos-false} conjectured
that the same random coloring also minimizes $d(K_r; G) + d(\overline{K}_r; G)$ for all $r$, but this was refuted by Thomason \cite{thomason} for all $r \ge 4$. A
simple consequence of Goodman's inequality is that $\min_G \max \{d(K_3; G), d(\overline{K}_3; G)\}=1/8$. The following construction by Franek and R\"odl \cite{franek-rodl}
shows that the analogous statement for $r \ge 4$ is again false. Let $H$ be a graph with vertex set $[2]^{13}$, the collection of all $8192$ binary vectors of length $13$. Two vertices are adjacent if the Hamming distance between the corresponding binary vectors
is a number in $\{1, 4, 5, 8, 9, 11\}$. Let $G$ be obtained from $H$ by replacing each vertex with a clique of size $n$, and
every edge with a complete bipartite graph. The number of $K_4$ and $\overline{K}_4$ in $G$ can be easily expressed in terms of the parameters of  $H$ (see \cite{franek-rodl}),
for large enough $n$ one can show that $d(K_4; G)<0.99\cdot \frac{1}{64}$ and $d(\overline{K}_4; G) <0.993\cdot \frac{1}{64}$.

While the min-max question remains at present very poorly understood, we succeeded to completely answer the max-min version of this problem.

\begin{theorem}
\label{thm:max_min}
$$\max_G \min \{d(K_r; G), d(\overline{K}_r; G)\} = \rho^r +o(1),$$
where $\rho$ is the unique root in $[0,1]$ of the equation $\rho^r = (1-\rho)^r + r\rho(1-\rho)^{r-1}$.
\end{theorem}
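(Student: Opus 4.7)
The plan is to derive Theorem \ref{thm:max_min} as a direct corollary of Theorem \ref{maintheorem}, combined with a one-parameter construction. Intuitively, Theorem \ref{maintheorem} describes the boundary of the achievable region in the $(d(\overline{K}_r;G), d(K_r;G))$-plane as the upper envelope of two symmetric curves, corresponding to $Q_{n,\theta n}$ and $\overline{Q}_{n,\theta n}$; the max--min value is the height of the intersection point of these curves on the diagonal $x=y$, which will turn out to equal $\rho^r$.

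For the lower bound, I would take $G = Q_{n,\lceil \rho n\rceil}$. A direct count gives $d(K_r;G) = \rho^r + o(1)$, and since every independent $r$-set uses at most one vertex from the $\lceil \rho n\rceil$-clique, $d(\overline{K}_r;G) = (1-\rho)^r + r\rho(1-\rho)^{r-1} + o(1)$. The defining equation for $\rho$ forces both densities to equal $\rho^r + o(1)$, and so $\min\{d(K_r;G),\, d(\overline{K}_r;G)\} \ge \rho^r - o(1)$.

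For the upper bound, let $G$ be any $n$-vertex graph. If $d(\overline{K}_r;G) < \rho^r$ then the minimum is already below $\rho^r$, so I may assume $d(\overline{K}_r;G) \ge \rho^r$. Applying Theorem \ref{maintheorem} with $s=r$ and $p=\rho^r$ gives $d(K_r;G) \le M_{r,r,\rho^r} + o(1)$, and the main small computation is $M_{r,r,\rho^r} = \rho^r$. For the first branch of $M_{r,r,p}$, setting $p^{1/r}=\rho$ turns $(1-p^{1/r})^r + rp^{1/r}(1-p^{1/r})^{r-1}$ into the right-hand side of the equation defining $\rho$, which equals $\rho^r$. For the second branch, $q=1-\rho$ solves $q^r+rq^{r-1}(1-q)=\rho^r$ (this is the defining equation for $\rho$, rewritten), and it is the unique such root in $[0,1]$ as guaranteed by the statement of Theorem \ref{maintheorem}; hence $(1-q)^r=\rho^r$. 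Both branches therefore yield $\rho^r$, giving $d(K_r;G)\le \rho^r + o(1)$.

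I do not expect a real obstacle here: the heavy lifting is already in Theorem \ref{maintheorem}. The one ingredient to extract is the symmetry that collapses both extremal candidates defining $M_{r,r,p}$ to a common value precisely at $p = \rho^r$; geometrically, this reflects the fact that the two extremal curves, one for the family $Q_{n,\cdot}$ and one for $\overline{Q}_{n,\cdot}$, meet on the diagonal $x=y$ at height $\rho^r$.
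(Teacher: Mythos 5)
Your proof is correct and follows the same route the paper indicates (the paper merely remarks that Theorem \ref{thm:max_min} ``follows easily from Theorem \ref{maintheorem}''); you supply the routine details, namely the lower-bound construction $Q_{n,\lceil\rho n\rceil}$ and the upper bound via the case split on whether $d(\overline{K}_r;G)\ge\rho^r$, together with the observation that $q=1-\rho$ is the unique root, so both branches of $M_{r,r,\rho^r}$ collapse to $\rho^r$.
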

This theorem follows easily from Theorem \ref{maintheorem}. Moreover, using Theorem \ref{stabilitytheorem}, we can also show that for every $\epsilon>0$ there is a $\delta>0$ such
that
every $n$-vertex graph $G$ with $\min \{d(K_r; G), d(\overline{K}_r; G)\} > \rho^r-\delta$ is $\epsilon$-close to a clique of size $\rho n$ or to the complement of this graph.

Here we prove these theorems using the method of shifting. In the next section we describe this well-known and useful technique in extremal set theory. Using shifting, we show how to reduce the problem to \textit{threshold graphs}. Section \ref{section_main} contains the proof of our main result for threshold graphs and section \ref{section_stability} contains the proof of the stability result. In Section \ref{section_shift} we sketch a second proof for the case $r=s$, based on a different representation of threshold graphs. We make
a number of comments on the analogous problems for hypergraphs in Section~\ref{section_hyper}. We finish this paper with some concluding remarks
and open problems.

\section{Shifting} \label{section_shifting}
{\em Shifting} is one of the most important and widely-used tools in extremal set theory. This method allows one to reduce many extremal problems to more structured instances
which are usually easier to analyze. Our
treatment is rather shallow and we refer the reader to Frankl's survey article \cite{frankl-survey} for a fuller account.

Let $\mathcal{F}$ be a family of subsets of a finite set $V$, and let $u, v$ be two distinct elements of $V$. We define the \textit{$(u, v)$-shift map} $S_{u\to v}$ as follows: for
every $F \in \mathcal{F}$, let
\[ S_{u\to v}(F, \mathcal{F}) := \left\{\begin{array}{ll} (F \cup \{v\})\setminus \{u\} & \text{if } u \in F, v\not \in F \text{ and } (F \cup
\{v\})\setminus \{u\} \not\in \mathcal{F}, \\ F & \text {otherwise.} \end{array}\right. \] We define the $(u,v)$-shift of $\mathcal{F}$,
to be the following family of subsets of $V$: $S_{u\to v}(\mathcal{F}) := \{ S_{u\to v}(F, \mathcal{F}) : F \in \mathcal{F}\}$. We observe that $|S_{u\to
v}(\mathcal{F})|=|\mathcal{F}|$. In this context, one may think of
$\mathcal{F}$ as a hypergraph over $V$. When all sets in $\mathcal F$ have cardinality $2$ this is a graph with vertex set $V$. As the next lemma shows, shifting of graph does not
reduce the number of $l$-cliques in it for every $l$. Recall that $\Ind(K_l;G)$ denotes the collection of all cliques of size $l$ in $G$.

\begin{lemma} \label{lemma_increase} For every integer $l>0$, every graph $G$, and every $u\neq v \in V(G)$ there holds
\[ S_{u\to v}(\Ind(K_l;G))\subseteq \Ind(K_l;S_{u\to v}(G)). \]
\end{lemma}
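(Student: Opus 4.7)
Fix an arbitrary $C \in \Ind(K_l;G)$ and set $C^* := S_{u\to v}(C,\Ind(K_l;G))$; the task is to verify that $C^* \in \Ind(K_l;S_{u\to v}(G))$. The starting observation, which drives everything, is a clean description of how the graph shift acts on individual edges: an edge $e \in E(G)$ is left in place by $S_{u\to v}$ unless $e=\{u,w\}$ for some $w\neq v$ with $\{v,w\}\notin E(G)$, in which case $e$ is replaced by $\{v,w\}$. So every edge incident to $u$ is either retained or ``transferred'' onto its twin $\{v,w\}$, while edges not incident to $u$ are untouched.

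The verification splits into four cases according to the positions of $u$ and $v$ in $C$ and whether $C':=(C\cup\{v\})\setminus\{u\}$ already belongs to $\Ind(K_l;G)$. In the three cases where the shift fixes $C$, namely (i) $u\notin C$, (ii) $u,v\in C$, and (iii) $u\in C$, $v\notin C$, $C'\in\Ind(K_l;G)$, I would check that none of the edges of the clique $G[C]$ gets moved by the graph shift. Case (i) is immediate since no edge of $G[C]$ touches $u$. In case (ii), the edge $\{u,v\}$ is never moved, and for any $w\in C\setminus\{u,v\}$ the edge $\{u,w\}$ survives because $\{v,w\}\in E(G)$ (as $v,w\in C$ and $C$ is a clique). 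In case (iii), the same conclusion holds since $v,w\in C'$ and $C'$ is a clique, so again $\{v,w\}\in E(G)$ prevents $\{u,w\}$ from moving.

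The remaining case is (iv) $u\in C$, $v\notin C$, $C'\notin\Ind(K_l;G)$, where $C^*=C'$. Here I would verify directly that $C'$ spans a $K_l$ in $S_{u\to v}(G)$: the edges lying inside $C'\setminus\{v\}=C\setminus\{u\}$ are unaffected by the shift, and for each $w\in C\setminus\{u\}$ the pair $\{v,w\}$ is an edge of $S_{u\to v}(G)$ because either $\{v,w\}\in E(G)$ to begin with and is untouched, or $\{v,w\}\notin E(G)$ and then the edge $\{u,w\}\in E(G)$ is transferred onto $\{v,w\}$. The clique hypothesis on $C$ is used precisely to guarantee that $\{u,w\}$ is available to be transferred whenever $\{v,w\}$ is missing.

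I do not expect any genuine obstacle: once the edge-level description of $S_{u\to v}$ is in hand, the lemma reduces to the bookkeeping above. The only spot that warrants some care is case (iv), since it is the unique case where $C$ actually moves, and it is where the fact that $C$ spans a clique in $G$ is indispensable for supplying the missing edges.
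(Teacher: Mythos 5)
Your proposal is correct and follows essentially the same route as the paper: analyze how the graph shift acts edge-by-edge, split on whether $C$ is fixed or moved, and use the clique hypothesis to supply the edges $\{u,w\}$ that get transferred to $\{v,w\}$ in the moved case. The only cosmetic difference is that you enumerate the three ``$C$ is fixed'' sub-cases separately, whereas the paper handles them all at once by asking which edge of $C$ could possibly be lost and deriving a contradiction.
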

\begin{proof} Let $A=S_{u\to v}(B,G)$, where $B$ is an $l$-clique in $G$. First, consider the cases when
$u\notin B$ or both $u, v\in B$ or $B\setminus\{u\}\cup\{v\}$ is also a clique in $G$. Then $A=B$ and we need to show that $B$ remains a clique after shifting. Which edge in $B$ can be
lost by shifting? It must be some edge $uw$ in $B$ that gets replaced by the non-edge $vw$ (otherwise we can not shift $uw$). Note that $vw$ is not in $B$, since $B$ is a clique.
Hence $u, w\in B$ and $v\not\in B$. But
then $B\setminus\{u\}\cup\{v\}$ is not a clique, contrary to our assumption.

In the remaining case when $u\in B$, $v\notin B$ and $B\setminus\{u\}\cup\{v\}$ is not a clique in $G$, we need to show that $A=B\setminus\{u\}\cup\{v\}$ is a clique after
shifting $S_{u\to v}(G)$. Every pair of vertices in $A\setminus\{v\}$ belongs to $B$ and the edge they span is not affected by the shifting.
So consider $v\not = w\in A$. If $vw \in E(G)$, this edge remains after shifting. If, however, $vw \notin E(G)$, note that $uw \in E(G)$ since both vertices belong to the clique
$B$. In this case $vw=S_{u\to v}(uw, G)$ and the claim is proved. \end{proof}

Since shifting edges from $u$ to $v$ is equivalent to shifting non-edges from $v$ to $u$, it is immediate that $S_{u\to v}(\Ind(\overline{K_l};G))\subseteq
\Ind(\overline{K_l};S_{u\to v}(G))$. Therefore we obtain the following corollary.

\begin{corollary}
\label{corollary_increase}
Let $G$ be a graph, let $H = S_{u\to v}(G)$ and let $l$ be a positive integer. Then
$$d(K_l; H) \ge d(K_l; G) \textrm{~~and~~~} d(\overline{K}_l; H) \ge d(\overline{K}_l; G).$$
\end{corollary}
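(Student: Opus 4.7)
The plan is to combine Lemma \ref{lemma_increase} with the size-preservation property of shifting that was noted just before the lemma, namely $|S_{u\to v}(\mathcal{F})|=|\mathcal{F}|$. First, I would apply Lemma \ref{lemma_increase} with the family $\mathcal{F}=\Ind(K_l;G)$ to obtain the set containment $S_{u\to v}(\Ind(K_l;G))\subseteq \Ind(K_l;H)$. Taking cardinalities, the left-hand side has exactly $|\Ind(K_l;G)|$ elements, while the right-hand side has $|\Ind(K_l;H)|$, which yields the count inequality $|\Ind(K_l;G)|\le |\Ind(K_l;H)|$. Since $G$ and $H$ have the same vertex set and hence the same $n$, dividing both sides by $\binom{n}{l}$ gives the desired density inequality for $K_l$.

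For the independent-set version, I would invoke the remark immediately following the proof of Lemma \ref{lemma_increase}: shifting edges from $u$ to $v$ in $G$ is the same operation as shifting non-edges from $v$ to $u$, so the analogous containment $S_{u\to v}(\Ind(\overline{K_l};G))\subseteq \Ind(\overline{K_l};H)$ is available for free, and the exact same counting argument then yields $d(\overline{K}_l;H)\ge d(\overline{K}_l;G)$.

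There is essentially no obstacle here; the corollary is a bookkeeping consequence of the lemma together with the trivial fact that the shift map permutes the family it acts on and hence does not change its size. The only thing one needs to be slightly careful about is not to confuse the two roles of the symbol $S_{u\to v}$, namely its action on individual sets and its induced action on families, but this is already resolved by the notation set up in Section \ref{section_shifting}.
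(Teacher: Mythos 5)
Your argument is exactly the paper's: the containments from Lemma \ref{lemma_increase} and the remark following it, combined with the size-preservation $|S_{u\to v}(\mathcal{F})|=|\mathcal{F}|$ and the fact that the vertex set (hence $\binom{n}{l}$) is unchanged, give the density inequalities. Nothing to add.
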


We say that vertex $u$ \textit{dominates} vertex $v$ if $S_{v\to u}(\mathcal{F})=\mathcal{F}$. In the case when $ \cal F$ is a set of edges of $G$, this implies
that every $w\not = u$ which is adjacent to $v$ is also adjacent to $u$. If $V=[n]$, we say that a family $\mathcal{F}$ is \textit{shifted} if $i$ dominates $j$ for every $i<j$. Every
family can be made shifted by repeated applications of shifting operations $S_{j\to i}$ with $i<j$. To see this note that a shifting operation that changes $\mathcal F$ reduces the
following non-negative potential function $\sum_{A\in\mathcal{F}}\sum_{i\in A}i$. As Corollary \ref{corollary_increase} shows, it suffices to prove Theorem \ref{maintheorem} for
shifted graphs.

In Section \ref{section_main} we use the notion of \textit{threshold graphs}.
There are several equivalent ways to define threshold graph (see \cite{chvatal-hammer}), and we adopt the
following definition.

\begin{definition}\label{definition_threshold}
We say that $G=(V,E)$ is a threshold graph if there is an ordering of $V$ so that every vertex is adjacent to either all or none of the preceding vertices.
\end{definition}

\begin{lemma}
\label{lemma_threshold}
A graph is shifted if and only if it is a threshold graph.
\end{lemma}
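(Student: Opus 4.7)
The plan is to prove both directions by induction on $n = |V(G)|$, driven by a single structural observation about shifted graphs.

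For the forward direction (shifted $\Rightarrow$ threshold), I would first record a closure property: if $G$ is shifted on $[n]$, $ab \in E(G)$, and $a' \le a$, $b' \le b$ with $a' \ne b'$, then $a'b' \in E(G)$. This follows by applying the two domination relations ``$a'$ dominates $a$'' and ``$b'$ dominates $b$'' in turn to push each endpoint of the known edge down to the desired value, being careful with the side conditions in the definition of $S_{u\to v}$ when one of the fixed endpoints coincides with a moving one. The closure property yields a key dichotomy: in any shifted graph on $[n]$, either vertex $1$ is adjacent to every other vertex, or vertex $n$ is isolated. Indeed, if $n$ has some neighbor $w$, applying closure to the edge $\{\min(w,n), \max(w,n)\}$ with $a'=1$ and $b'=k$ forces $1k \in E(G)$ for every $k \in \{2,\dots,n\}$. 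With the dichotomy in hand, if $1$ is universal I remove it and apply induction to the relabeled shifted graph on $\{2,\dots,n\}$, then append $1$ at the end of the resulting threshold ordering; if $n$ is isolated I apply induction to $G - n$ and append $n$ at the end, adjacent to none.

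For the converse (threshold $\Rightarrow$ can be relabeled to shifted), I induct on $n$ using a threshold ordering $v_1,\dots,v_n$ in which the last vertex is either adjacent to all or none of its predecessors. If $v_n$ is isolated I label it $n$ and apply the inductive relabeling of $G - v_n$ onto $[n-1]$; if $v_n$ is universal I label it $1$ and apply induction to relabel $G - v_n$ onto $\{2,\dots,n\}$. The shifted condition on pairs within the subgraph then holds by the inductive hypothesis, while for pairs involving the extremal label it is immediate, since a universal vertex dominates every other vertex and every vertex dominates an isolated one.

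The main obstacle will be the dichotomy in the forward direction, and more specifically the careful statement and proof of the closure property, where the case analysis around $a'=b$ or $b'=a$ must be treated explicitly. Once that structural lemma is in place, both inductions reduce to essentially routine bookkeeping, so I expect most of the write-up to focus on setting up the closure property cleanly and verifying the small-case boundary conditions of the dichotomy.
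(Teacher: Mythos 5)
Your proof is correct, and at heart it rests on the same structural fact as the paper's: in a shifted graph the low-labeled vertices dominate the high-labeled ones, which forces either vertex $1$ to be universal or vertex $n$ to be isolated (equivalently, every non-neighbor of $1$ is isolated). What differs is the packaging. The paper proves the forward direction by directly exhibiting the ordering $\ldots,\,3,\,N_G(2)\setminus N_G(3),\,2,\,N_G(1)\setminus N_G(2),\,1,\,V\setminus N_G(1)$ and inducting on $G[N_G(1)]$, and proves the converse by classifying vertices as ``good''/``bad'' in a threshold ordering, verifying the three pairwise domination relations, and then reordering as good-in-reverse followed by bad-in-order. You instead run a uniform one-vertex-at-a-time induction in both directions, justified by a closure lemma for shifted graphs (downward closure of edges under decreasing endpoint labels), and peel off either a universal or an isolated vertex at each step. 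Your route is slightly more modular (one mechanism for both directions, and the closure lemma is a reusable fact about shifted families), while the paper's is more explicit and gives the final ordering in closed form without iterating; both are essentially elementary. One small nit: your closure statement should be checked in the degenerate subcase $a'=b$ (or $b'=a$) where the two ``push-down'' steps interact, which you flag but should carry out, since it is exactly the place where the $v\notin F$ side condition in the definition of $S_{u\to v}$ matters.
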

\begin{proof}
Let $G$ be a shifted graph. We may assume that $V=[n]$, and $i$ dominates $j$ in $G$ for every $i<j$. Consider the following order of vertices,
$$
...,\;3,\;N_G(2)\backslash N_G(3),\;2,\;N_G(1)\backslash N_G(2),\;1,\; V\backslash N_G(1)\;,
$$
where the vertices inside the sets that appear here are ordered arbitrarily.
We claim that this order satisfies Definition~\ref{definition_threshold}. First, every vertex $v\notin  N_G(1)$ is isolated.
Indeed, if $u\sim v$, then necessarily $v\sim 1$, since $1$ dominates $u$. Therefore, vertex $1$ and its non-neighbors satisfy the condition in the definition.
The proof that $G$ is threshold proceeds by induction applied to $G[N_G(1)]$.

Conversely, let $G$ be a threshold graph. Let $v_1, v_2, \ldots, v_n$ be an ordering of $V$ as in Definition~\ref{definition_threshold}. We say that a vertex is good (resp. bad) if it is adjacent to all (none) of its preceding vertices. Consider two vertices $v_i$ and $v_j$. It is straightforward to show that $v_i$ dominates $v_j$ if either (1) $v_i$ is good and $v_j$ is bad, (2) they are both good and $i>j$ or (3) they are both bad and $i<j$. Therefore we can reorder the vertices by first placing the good vertices in reverse order followed by the bad vertices in the regular order. This new ordering demonstrates that $G$ is shifted.
\end{proof}

\section{Main result} \label{section_main}
In this section, we prove Theorem \ref{maintheorem}. It will be convenient to reformulate the theorem, in a way that is analogous to the Kruskal-Katona theorem.
\begin{theorem}
\label{maintheorem_unrestricted}
Let $r, s \ge 3$ be integers and let $a,b > 0$ be real numbers. The maximum (up to $\pm o_n(1)$) of the function $f(G):= \min\{a\cdot d(K_s; G), b \cdot d(\overline{K}_r; G)\}$ over all $n$-vertex graphs
is attained in one of two graphs, (or both), one of the form $Q_{n,t}$ and another $\overline Q_{n,t'}$.
In particular, $f(G) \le \max \{a \cdot \alpha^{s}, b \cdot \beta^{r}\} + o(1)$, where $\alpha$ is the unique
root in $[0,1]$ of $a \cdot \alpha^s = b \cdot [(1-\alpha)^r + r \alpha (1-\alpha)^{r-1}]$ and $\beta$ is the unique root in $[0,1]$ of $b \cdot \beta^r = a \cdot [(1-\beta)^s + s \beta
(1-\beta)^{s-1}]$.
\end{theorem}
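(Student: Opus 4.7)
The plan is to combine the shifting reduction from Section~\ref{section_shifting} with an analytical study on the space of threshold graphs. Since Corollary~\ref{corollary_increase} shows that shifting never decreases $d(K_s;G)$ or $d(\overline K_r;G)$, it never decreases $f(G)$; and by Lemma~\ref{lemma_threshold} iterated shifting terminates at a threshold graph. So I would first reduce the problem to the case where $G$ is a threshold graph and prove the bound there.

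For a threshold graph, fix the ordering from Definition~\ref{definition_threshold} and let $\sigma(i)\in\{0,1\}$ indicate whether vertex $i$ is ``good''. An $s$-clique $\{i_1<\dots<i_s\}$ occurs iff $\sigma(i_j)=1$ for every $j\ge 2$, and dually for $\overline K_r$. Writing $F(u)=\int_u^1\sigma$ and $G(u)=(1-u)-F(u)$ and passing to the scaled limit $u_i=i/n$, the densities become
\[
d(K_s;G)=s\int_0^1 F(u)^{s-1}\,du+o(1),\qquad d(\overline K_r;G)=r\int_0^1 G(u)^{r-1}\,du+o(1).
\]
The problem then becomes the continuous optimization $\max\Phi(\sigma)$, where $\Phi(\sigma):=\min\bigl(as\int_0^1 F^{s-1},\ br\int_0^1 G^{r-1}\bigr)$, over measurable $\sigma:[0,1]\to[0,1]$. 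The special profiles $\sigma=\mathbf 1_{[0,\alpha]}$ and $\sigma=\mathbf 1_{[\beta,1]}$ correspond exactly to the balanced extremal graphs $Q_{n,\alpha n}$ and $\overline Q_{n,\beta n}$, attaining $\Phi=a\alpha^s$ and $\Phi=b\beta^r$ respectively.

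The central local move is a shift perturbation that transfers a small mass $\delta$ of $\sigma$ from $u_2$ to $u_1$ with $u_1<u_2$; a direct calculation gives $\Delta F=-\delta\,\mathbf 1_{(u_1,u_2]}$, so $\Delta I_1<0$ and $\Delta I_2>0$, while the reverse shift flips the signs. Now suppose $\sigma^*$ is extremal. If $\sigma^*$ is not left-pushed (some $0$ precedes some $1$) and $as\int F^{s-1}>br\int G^{r-1}$, a leftward shift strictly raises the smaller side $br\int G^{r-1}$ while keeping it $\le as\int F^{s-1}$, so $\Phi$ strictly increases, contradicting optimality. The symmetric argument rules out the opposite imbalance unless $\sigma^*$ is right-pushed. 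Therefore at the optimum one of three cases holds: (i) $\sigma^*$ is left-pushed and optimizing over $t$ yields $\sigma=\mathbf 1_{[0,\alpha]}$ with value $a\alpha^s$; (ii) $\sigma^*$ is right-pushed and one obtains $\sigma=\mathbf 1_{[\beta,1]}$ with value $b\beta^r$; or (iii) the two integrals are balanced, $as\int F^{s-1}=br\int G^{r-1}$.

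The main obstacle is case (iii): first-order perturbations no longer strictly improve $\Phi$, so one must compare directly against the two boundary families. I would derive an Euler--Lagrange-type condition from the KKT optimality: at every point where $\sigma^*\in(0,1)$, combining the gradients $\partial I_1/\partial\sigma(u_0)=s(s-1)\int_0^{u_0}F^{s-2}\,du$ and $\partial I_2/\partial\sigma(u_0)=-r(r-1)\int_0^{u_0}G^{r-2}\,du$ and then differentiating in $u_0$ forces the pointwise relation $F(u)^{s-2}=K\cdot G(u)^{r-2}$ for some constant $K>0$. Together with $F+G=1-u$ and the boundary conditions on $F(0),F(1)$, this severely constrains the profile of $\sigma^*$; in particular when $r=s$ it forces $\sigma^*$ to be constant on its interior region. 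I would then carry out a direct comparison -- eliminating $K$ via the balance equation -- to show that the common value of $asI_1=brI_2$ at any such restricted $\sigma^*$ is at most $\max(a\alpha^s,b\beta^r)$. Equivalently, the point $(d(\overline K_r;\sigma^*),d(K_s;\sigma^*))$ always lies below the upper envelope of the $Q$- and $\overline Q$-curves, and I expect this last step to reduce to a one-variable monotonicity/convexity check against the two explicit boundary curves.
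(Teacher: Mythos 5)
Your reduction to threshold graphs via shifting is correct and matches the paper, and your integral parameterization via $\sigma:[0,1]\to\{0,1\}$ and $F(u)=\int_u^1\sigma$ is essentially the representation the paper uses in its Section~\ref{section_shift} alternative proof (there stated for $r=s$ only); the paper's main proof instead works with the block-length polynomials $p(\mathbf x,\mathbf y)$, $q(\mathbf x,\mathbf y)$ over $W_k$. Your first-order shift argument correctly reduces the problem to three cases, and the derivation that a balanced interior optimum must satisfy $F(u)^{s-2}=K\cdot G(u)^{r-2}$ is the correct continuous analogue of the matrix-rank conditions in Lemmas~\ref{lemma_small0}--\ref{lemma_small2}.

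The genuine gap is in case (iii). You relax to $[0,1]$-valued $\sigma$, extract the Euler--Lagrange relation at points with $\sigma^*\in(0,1)$, and then say you ``expect this last step to reduce to a one-variable monotonicity/convexity check'' comparing the balanced profile to the two boundary curves. This is exactly where the paper's work is concentrated and it is not a routine check. In the paper's block formulation this step requires three separate arguments: killing $x_3$ (Lemma~\ref{lemma_small0}, which needs a second-order positive-definiteness argument for the degenerate matrix case, not just a first-order one), and then killing $y_2$ and $x_2$ (Lemmas~\ref{lemma_small1},~\ref{lemma_small2}), each of which hinges on constructing an explicit substitution that holds one of $p$ or $q$ exactly fixed while strictly increasing the other, and each of which ultimately depends on the sharp elementary inequality of Lemma~\ref{lemma_ineq}, $\bigl(1+\frac{1}{(r-1)(s-1)\alpha}\bigr)^{s-1}<1+\frac1\alpha$ where $(\alpha+1)^{r-1}-1=(r-1)(s-1)\alpha$. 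Nothing in your sketch shows how the pointwise relation $F^{s-2}=K G^{r-2}$ leads to this comparison, nor does your plan produce the second-order argument that is needed when the first-order system is degenerate. You should also be more careful about the relaxation: the supremum over fractional $\sigma$ does give an upper bound, but your Euler--Lagrange condition only constrains the fractional optimum, so you must either (a) show that optimum is attained at a $\{0,1\}$-valued $\sigma$ after all, or (b) prove the fractional optimum value is $\le\max\{a\alpha^s,b\beta^r\}$ directly; you are implicitly taking route (b) without carrying it out. As it stands the proposal sets up the right framework (a legitimate alternative to the paper's polynomial parameterization) but stops just before the substantive analysis.
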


We turn to show how to deduce Theorem \ref{maintheorem} from Theorem \ref{maintheorem_unrestricted}. We assume that $r,s \ge 3$, since the other cases follow from
Kruskal-Katona theorem.

\begin{proof}[Proof of Theorem \ref{maintheorem}]
Let $M$ be the maximum of $d(K_s;G)$ over all graphs $G$ on $n$ vertices with $d(\overline{K}_r;G) \geq p$. Fix such an
extremal $G$ with $d(\overline{K}_r;G) =  p'\geq p$
and $d(K_s;G) = M$. Now apply Theorem \ref{maintheorem_unrestricted} with $a = p$ and $b = M$ and the same $n$, $r$ and $s$. The extremal graph $G'$
that Theorem \ref{maintheorem_unrestricted} yields, satisfies
\[
f(G') \ge f(G) = \min\{a\cdot d(K_s;G), b\cdot d(\overline{K}_r;G)\} = p \cdot M,
\]
hence $d(K_s;G') \ge M$ and $d(\overline{K}_r;G')\ge p$. Therefore, the same $G'$ is extremal for Theorem \ref{maintheorem} as well and we know that the maximum in this theorem is achieved asymptotically by a graph of $\mathcal Q_n$.

Note that we can always assume that in the extremal graph
$d(\overline{K}_r;G')=p$ since otherwise we can add edges to $G'$ without decreasing $d(K_s;G')$ until $d(\overline{K}_r;G')=p$ is obtained. Therefore the maximum is attained either by a graph of the form $\overline Q_{n,p^{1/r}n}$ or by $Q_{n,(1-q)n}$, where $q^r+rq^{r-1}(1-q)=p$. This implies that asymptotically the maximum in Theorem \ref{maintheorem} is indeed
$$M_{r,s,p} = \max \{(1-p^{1/r})^s + sp^{1/r}(1-p^{1/r})^{s-1}, (1-q)^s\}.$$
\end{proof}

By Corollary \ref{corollary_increase} and Lemma \ref{lemma_threshold}, $f(G)$ is maximized by a threshold graph. We turn to prove Theorem \ref{maintheorem_unrestricted} for threshold graphs. Let $G$ be a threshold graph on an ordered vertex set $V$, as in Definition~\ref{definition_threshold}. There exists an integer $k>0$, and a partition $A_1,\ldots,A_{2k}$ of $V$ such that
\begin{enumerate}
\item If $v\in A_i$ and $u\in A_j$ for $i<j$, then $v<u$.
\item Every vertex in $A_{2i-1}$ (respectively $A_{2i}$) is adjacent to all (none) of its preceding vertices.
\end{enumerate}
Let $x_i=\frac{|A_{2i-1}|}{|V|}$ and $y_i=\frac{|A_{2i}|}{|V|}$. Clearly $\sum_{i=1}^k (x_i + y_i) = 1$. Up to a negligible error-term,
\begin{align*}
d(K_s;G)=p(\mathbf{x},\mathbf{y}) &:= \left(\sum_{i=1}^{k} x_i\right)^s + s \cdot \sum_{i = 1}^{k-1} \left[y_i\cdot \left(\sum_{j=i + 1}^k x_j\right)^{s-1}\right], \\
d(\overline K_r;G)=q(\mathbf{x},\mathbf{y}) &:= \left(\sum_{i=1}^{k} y_i\right)^r + r \cdot \sum_{i = 1}^{k} \left[x_i\cdot \left(\sum_{j=i}^k y_j\right)^{r-1}\right].
\end{align*}
Where $\mathbf{x} = (x_1,x_2,\ldots, x_k)$ and $\mathbf{y} = (y_1,y_2,\ldots, y_k)$. Occasionally, $p$ will be denoted by $p_s$ and $q$ by $q_r$ to specify the parameter of these functions.

Our problem can therefore be reformulated as follows. For
given integers $k\ge 2$, $r,s\ge 3$ and real $a,b>0$, let $W_k \subseteq \mathbb{R}^{2k}$ be the set
\[
W_k := \left\{(x_1,x_2,\ldots, x_k,y_1,y_2,\ldots,y_k)\in \mathbb{R}^{2k} :
x_i,y_i \ge 0 \text{ for all $i$ and }
\sum_{i=1}^k (x_i + y_i) = 1\right\}.
\]
Let $p, q : W_k \to \mathbb{R}$ be the two homogeneous polynomials defined above, We are interested in maximizing the real function $$\varphi(\mathbf{x},\mathbf{y}) := \min \{a \cdot p(\mathbf{x},\mathbf{y}), b \cdot q(\mathbf{x},\mathbf{y})\}.$$ This problem is well defined since $W_k$ is compact and $\varphi$ is continuous.

We say that $(\mathbf x,\mathbf y)\in W_k$ is \emph{non-degenerate}
if the set of zeros in the sequence $(y_1, x_2, y_2,\ldots,  x_k, y_k)$,
with $x_1$ omitted, forms a suffix. If $(\mathbf x,\mathbf y)\in W_k$ is degenerate, then there is a non-degenerate $(\mathbf x',\mathbf y')\in W_k$ with
$\varphi(\mathbf{x},\mathbf{y})=\varphi(\mathbf{x}',\mathbf{y}')$. Indeed, if $y_i=0$ and $x_{i+1}\ne 0$ for some $1\le i <k$, let $(\mathbf x',\mathbf y')\in W_{k-1}$ be defined by
\[
\mathbf x' = (x_1,\ldots,x_{i-1},x_i+x_{i+1},x_{i+2},\ldots,x_k)
\]
\[
\mathbf y' = (y_1,\ldots,y_{i-1},y_{i+1},\ldots,y_k)
\]

It is easy to verify that $p(\mathbf x,\mathbf y)=p(\mathbf x',\mathbf y')$ and $q(\mathbf x,\mathbf y)=q(\mathbf x',\mathbf y')$. By induction on $k$, we assume that $(\mathbf x',\mathbf y')$ is non-degenerate, and by padding $\mathbf x'$ and $\mathbf y'$ with a zero, the claim is proved. The case $x_i=0$ and $y_i\ne 0$ is proved similarly. In particular, $\varphi$ has a non-degenerate maximum in $W_k$.

Our purpose is to show that the original problem is optimized by graphs from $\mathcal{Q}_n$. This translates to the claim that a non-degenerate $(\mathbf{x},\mathbf{y})$ that
maximizes $\varphi$ is supported only on either $x_1,y_1$ or $y_1,x_2$, which corresponds to either a clique $Q_{n,t}$ or a complement of a clique $\overline{Q}_{n,t}$, respectively.

\begin{lemma}
\label{lemma_mainLemma}
Let $(\mathbf{x},\mathbf{y}) \in W_k$ be a non-degenerate maximum of $\varphi$. If $x_1>0$, then for every $i\ge 2$, $x_i=y_i=0$. On the other hand, if $x_1=0$ then $y_i=0$ for every
$i\ge 2$, and $x_i=0$ for every $i\ge 3$.
\end{lemma}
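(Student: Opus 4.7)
The plan is to analyze Lemma~\ref{lemma_mainLemma} by combining KKT optimality conditions at the non-degenerate maximum with a second-order argument to rule out interior critical points whose support is larger than allowed.

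First I argue that at any non-trivial non-degenerate maximum one must have $a\,p(\mathbf{x},\mathbf{y}) = b\,q(\mathbf{x},\mathbf{y})$: if say $ap>bq$, a small mass transfer between two active coordinates can strictly raise $bq$ while keeping $ap\ge bq$, contradicting maximality. Applying KKT to $\varphi=\min\{ap,bq\}$ on the simplex $\sum(x_i+y_i)=1$ then produces $\lambda,\mu\ge 0$ (not both zero) and $c\in\mathbb{R}$ with $\lambda a\,\partial p/\partial v+\mu b\,\partial q/\partial v = c$ for every coordinate $v$ in the support. Using the explicit partials --- for instance $\partial p/\partial x_j = sS^{s-1}+s(s-1)\sum_{i<j} y_i X_{>i}^{s-2}$ and $\partial q/\partial y_j = rT^{r-1}+r(r-1)\sum_{i\le j} x_i Y_{\ge i}^{r-2}$ --- together with strict positivity of various terms, one checks that $\lambda,\mu>0$ whenever the support has three or more nonzero coordinates.

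Suppose for contradiction we are in Case~A with $x_1,y_1,x_2>0$; Case~B with $x_1=0$ and $y_1,x_2,y_2>0$ is handled by the analogous argument applied one position later. Taking pairwise differences of the KKT equations at $x_1,y_1,x_2$ eliminates $c$, and dividing the two resulting identities yields the critical equation
\[
(S^{s-1}-X_{>1}^{s-1})(T^{r-1}-Y_{\ge 2}^{r-1}) = (s-1)(r-1)\,x_1 y_1\, X_{>1}^{s-2} T^{r-2},
\]
which in the cleanest sub-case $k=2$ (so $Y_{\ge 2}=0$) reduces to $(x_1+x_2)^{s-1}-x_2^{s-1}=(s-1)(r-1)\,x_1 x_2^{s-2}$. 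This equation cuts out a candidate locus of interior critical points (further constrained by the balance condition $ap=bq$).

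The main obstacle is showing that this locus contains no genuine local maximum of $\varphi$. My plan is a second-order analysis: since KKT forces the first-order gradients of $ap$ and $bq$ restricted to the simplex tangent subspace to be antiparallel, there is a one-dimensional direction $d$ along which both $ap$ and $bq$ are first-order stationary. I compute the second directional derivatives $d^{\top}H_{ap}\,d$ and $d^{\top}H_{bq}\,d$ at the critical point and expect both to be strictly positive, with the hypotheses $r,s\ge 3$ playing an essential role here (matching the fact that $r=2$ or $s=2$ is the Kruskal--Katona regime, in which the conclusion takes a different form). This will make $\min\{ap,bq\}$ strictly increase along $\pm d$, contradicting maximality. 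For larger supports (further nonzero $y_i, x_j$), additional KKT equations at these extra coordinates supply further ratios of $\lambda/\mu$, and either inconsistency among these ratios immediately contradicts the setup or an induction on support size reduces the problem to the three-variable base case above.
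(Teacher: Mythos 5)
Your high-level framing is on target: you correctly identify that at a non-degenerate maximum one needs $a\cdot p = b\cdot q$ (the paper's Lemma~\ref{lemma_equality}), you set up the Lagrange/KKT conditions and eliminate the multiplier by taking pairwise differences, and your determinant-type critical equations match the ones the paper derives (e.g.\ your $k=2$ relation $(x_1+x_2)^{s-1}-x_2^{s-1}=(s-1)(r-1)x_1x_2^{s-2}$ is exactly the equation appearing in Lemma~\ref{lemma_small2}). So the skeleton is right.

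The genuine gap is the step you call the ``main obstacle.'' You propose to finish by a second-order argument: along the one-dimensional direction $d$ in the kernel of the (rank-deficient) first-order system, you \emph{expect} both $d^{\top}H_{ap}d$ and $d^{\top}H_{bq}d$ to be strictly positive, and you leave this unverified. Two issues. First, this is not even the right expectation for all cases: in the paper's reduction to $x_3=0$ (Lemma~\ref{lemma_small0}), the perturbation is purely in the $x$-coordinates, where $q$ is \emph{linear}, so $d^{\top}H_{bq}d = 0$ there, not positive; the argument works because $d^{\top}H_{ap}d>0$ and $q$ is unchanged, contradicting $ap=bq$ rather than raising both. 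Second, for the mixed-variable cases (killing $y_2$ and then $x_2$), the paper does \emph{not} run a second-order argument at all. It instead writes down an explicit rebalancing of mass (a concrete $(\mathbf{x}',\mathbf{y}')$ with $x_1'+x_2'=x_1+x_2$, $y_1'+y_2'=y_1+y_2$) that keeps one of $p,q$ \emph{exactly} fixed and strictly increases the other; the strict increase is exactly where the hypothesis $r,s\ge 3$ enters, via the auxiliary inequality in Lemma~\ref{lemma_ineq}. Your plan needs either to carry out the Hessian sign computation (and confront the possibility that along $d$ one or both second derivatives are nonpositive, which a local maximum would tolerate), or to replace it with such an explicit construction. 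As written, the conclusion ``this locus contains no genuine local maximum'' is asserted rather than proved.

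A smaller but real gap: your handling of supports larger than three coordinates is waved off as ``induction on support size'' via inconsistency of the extra KKT ratios, but the paper's elimination of $x_3$ is not a ratio-consistency argument; it is the separate $3\times 3$ linear-algebra/positive-definiteness argument of Lemma~\ref{lemma_small0}, which reduces the configuration to $x_1,x_2,y_1,y_2$ before the two-variable determinant conditions can even be applied. You would need an analogous concrete step, not just an appeal to overdeterminedness.
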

\begin{proof}
We note first that the second part of the lemma is implied by the first part. Define $\mathbf{x}'$ by
\[
x'_i := \left\{\begin{array}{ll}
x_{i+1} & \text{ if } i < k, \\
0  & \text{ if } i = k.
\end{array}\right.
\]
Clearly, if $x_1=0$, then $p_s(\mathbf{x,y})=q_s(\mathbf{y,x'})$, $q_r(\mathbf{x,y})=p_r(\mathbf{y,x'})$, and $$\varphi'(\mathbf{y},\mathbf{x'}) := \min \{b \cdot p_r(\mathbf{y},\mathbf{x'}), a \cdot q_s(\mathbf{y},\mathbf{x'})\}=\varphi(\mathbf{x},\mathbf{y}).$$ Since $\varphi$ attains its maximum when $x_1=0$, maximizing it is equivalent to maximizing $\varphi'(\mathbf{y,x'})$. Since $(\mathbf{x},\mathbf{y})$ is non-degenerate, $y_1>0$, and applying the first part of Lemma~\ref{lemma_mainLemma} for $\varphi '(\mathbf y,\mathbf x ')$ finishes the proof, by  obtaining that for every $i\geq 2$, $y_i=x'_i=0$.

The first part of Lemma~\ref{lemma_mainLemma} is proved in the following lemmas. We successively show that $x_3=0$, then $y_2=0$ and finally $x_2=0$.

\end{proof}

Here is a local condition that maximum points of $\varphi$ satisfy.
\begin{lemma}
\label{lemma_equality}
If $\varphi$ takes its maximum at a non-degenerate $(\mathbf{x}, \mathbf{y}) \in W_k$, then $a \cdot p(\mathbf{x}, \mathbf{y}) = b\cdot q(\mathbf{x}, \mathbf{y})$.
\end{lemma}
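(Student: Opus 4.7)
The plan is to argue by contradiction using the KKT conditions for constrained optimization on the compact polytope $W_k$. Suppose $(\mathbf{x},\mathbf{y})$ is a non-degenerate maximum of $\varphi$ with $a\cdot p(\mathbf{x},\mathbf{y}) \neq b\cdot q(\mathbf{x},\mathbf{y})$; by the symmetry between $(a,p)$ and $(b,q)$ in the definition of $\varphi$, I may assume $a\cdot p < b\cdot q$. Then by continuity $\varphi = a\cdot p$ in a neighborhood, so $(\mathbf{x},\mathbf{y})$ must be a local maximum of the single polynomial $p$ on $W_k$. I would then show this is impossible.

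As a preliminary I would verify that the global maximum of $\varphi$ is strictly positive, ruling out degenerate situations. The non-degenerate point with $x_1=y_1=\tfrac12$ and all other coordinates zero already has $p=(1/2)^s$ and $q=(r+1)/2^r$, so any maximum satisfies $\varphi \ge \min\{a/2^s,\,b(r+1)/2^r\} > 0$, and in particular both $p>0$ and $q>0$ at the assumed maximum.

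Next I would write down the partial derivatives
\[
\frac{\partial p}{\partial x_l}=s\Bigl(\sum_j x_j\Bigr)^{s-1}+s(s-1)\sum_{i<l}y_i\Bigl(\sum_{j>i}x_j\Bigr)^{s-2},\qquad \frac{\partial p}{\partial y_l}=s\Bigl(\sum_{j>l}x_j\Bigr)^{s-1}
\]
(with $\partial p/\partial y_k=0$) and invoke KKT: there is a multiplier $\lambda$ so that $\partial p/\partial x_i=\lambda$ whenever $x_i>0$, $\partial p/\partial y_i=\lambda$ whenever $y_i>0$, and $\le \lambda$ on any zero coordinate. I would split on whether $x_1>0$. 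If $x_1>0$, then $\lambda = s(\sum_j x_j)^{s-1}>0$; any $y_l>0$ with $l<k$ would force $\sum_{j\le l} x_j = 0$, contradicting $x_1>0$, while $y_k>0$ would force $\lambda=0$. So all $y_l$ vanish, making $q=0$ and contradicting $\varphi>0$. If instead $x_1=0$, non-degeneracy forces $y_1>0$, and the $y_1$-equation yields $\lambda = s T^{s-1}$ with $T=\sum_{j\ge 2}x_j$; $T=0$ would give $p=0$, a contradiction, so for the smallest $l\ge 2$ with $x_l>0$ the equation $\partial p/\partial x_l=\lambda$ collapses to $\sum_{i=1}^{l-1}y_i(\sum_{j>i}x_j)^{s-2}=0$, whose $i=1$ summand equals $y_1 T^{s-2}>0$, again a contradiction.

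The opposite case $a\cdot p > b\cdot q$ is handled symmetrically, applying the same argument to $q$ using $\partial q/\partial x_i = r(\sum_{j\ge i}y_j)^{r-1}$ and $\partial q/\partial y_l = r(\sum_j y_j)^{r-1}+r(r-1)\sum_{i\le l}x_i(\sum_{j\ge i}y_j)^{r-2}$. The main obstacle is the boundary behavior: because $(\mathbf{x},\mathbf{y})$ generally lies on several faces of $W_k$, the KKT \emph{inequalities} on vanishing coordinates are weak and do not immediately preclude a local maximum. Non-degeneracy is exactly the structural hypothesis that unknots this: it guarantees that whichever of $x_1$ or $y_1$ is nonzero pins down $\lambda$ precisely, after which the contradiction at a nonzero coordinate of the opposite type falls out of the explicit formula for the gradient.
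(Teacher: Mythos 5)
Your proof is correct, and it takes a genuinely different route from the paper's. Where the paper argues by constructing two concrete perturbations --- transferring mass from $y_1$ to $x_1$ to strictly increase $p$, and (depending on whether $x_1>0$ or $x_1=0$) transferring mass from $x_1$ or $x_2$ to $y_1$ to strictly increase $q$, then applying whichever one raises the currently smaller of $a\cdot p$ and $b\cdot q$ --- you instead observe that if $a\cdot p<b\cdot q$ then near the maximizer $\varphi$ coincides with $a\cdot p$, so the point would be a local maximizer of $p$ alone on $W_k$, and you rule that out by showing the KKT stationarity conditions cannot be satisfied. The key step that carries the weight in both arguments is the same: non-degeneracy (together with $\varphi_{\max}>0$) forces $y_1>0$ and one of $x_1$ or $x_2$ to be positive, and this is what makes the relevant perturbation (or, in your language, the violated stationarity condition) available. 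Your version is arguably more systematic, since the Lagrange multiplier $\lambda$ is pinned down once by whichever of $x_1$ or $y_1$ is positive, and all contradictions then drop out of the explicit gradient formulas; the paper's version is more elementary and avoids invoking optimization machinery, but one must check case by case that each perturbation does the job and stays inside $W_k$.

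Two small remarks. First, your appeal to symmetry in the case $a\cdot p>b\cdot q$ is slightly glib: the polynomials $p$ and $q$ are not literally symmetric (compare $\sum_{j>i}x_j$ in $p$ with $\sum_{j\ge i}y_j$ in $q$; the paper's change of variables $(\mathbf{x},\mathbf{y})\mapsto(\mathbf{y},\mathbf{x}')$ that realizes the symmetry requires $x_1=0$). The argument nevertheless goes through with the formulas you list: $y_1>0$ gives $\lambda=r(\sum y_j)^{r-1}+r(r-1)x_1(\sum y_j)^{r-2}$, and since $p>0$ forces some $x_l>0$, the equation $\partial q/\partial x_l=\lambda$ yields the contradiction, but it is worth spelling out rather than waving at symmetry. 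Second, in your $x_1>0$ subcase, once you know $y_1>0$ (which you must, since otherwise non-degeneracy forces all remaining coordinates to vanish and $\varphi=0$), you can contradict $\lambda$ directly by comparing $\partial p/\partial x_1$ with $\partial p/\partial y_1$ without ranging over all $y_l$ --- a cosmetic simplification, not a gap.
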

\begin{proof}
Note that $0<y_1<1$, since $(\mathbf{x},\mathbf{y})\in W$ is non-degenerate. We consider two perturbations of the input, one of which increases $p(\mathbf{x},\mathbf{y})$, and the other increases $q(\mathbf{x},\mathbf{y})$. Consequently, if $a\cdot p(\mathbf{x},\mathbf{y}) \ne b\cdot q(\mathbf{x},\mathbf{y})$, by applying the appropriate perturbation, we increase the smallest between $a\cdot p(\mathbf{x},\mathbf{y})$ and $b\cdot q(\mathbf{x},\mathbf{y})$, thus increasing $\min\{a\cdot p(\mathbf{x},\mathbf{y}), b\cdot q(\mathbf{x},\mathbf{y})\}$, contrary to the maximality assumption.

To define the perturbation that increases $p$, let $\mathbf{x'}=\mathbf{x}+t\mathbf{e_1}$ and $\mathbf{y'}=\mathbf{y}-t\mathbf{e_1}$,
where $0<t<y_1$, and $\mathbf{e_1}$ is the first unit vector in $\mathbb{R}^k$. Then, $(\mathbf{x'},\mathbf{y'}) \in W$ and
\[
\frac{\partial p(\mathbf{x'},\mathbf{y'})}{\partial t}=
s \left(t+\sum_{i=1}^{k} x_i\right)^{s-1} - s \cdot   \left(\sum_{j=2}^k x_j\right)^{s-1} > 0
\]
as claimed.

In order to increase $q$, consider two cases. If $x_1=0$, let $\mathbf{x'}=\mathbf{x}-t\mathbf{e_2}$ and $\mathbf{y'}=\mathbf{y}+t\mathbf{e_1}$,
where $0<t<x_2$. Then, $(\mathbf{x'},\mathbf{y'}) \in W$ and
\[
\frac{\partial q(\mathbf{x'},\mathbf{y'})}{\partial t}=
r \left(t+\sum_{i=1}^{k} y_i\right)^{r-1} - r \cdot   \left(\sum_{j=k}^n y_j\right)^{r-1} > 0.
\]
If $x_1>0$, we let $\mathbf{x'}=\mathbf{x}-t\mathbf{e_1}$ and $\mathbf{y'}=\mathbf{y}+t\mathbf{e_1}$,
where $0<t<x_1$. Then,
\[
\frac{\partial q(\mathbf{x'},\mathbf{y'})}{\partial t}=
r(x_1-t)(r-1)\left(t+\sum_{i=1}^{k} y_i\right)^{r-2}> 0.\]

\end{proof}

\begin{lemma}
\label{lemma_small0}
If $(\mathbf{x},\mathbf{y}) \in W_k$ is a non-degenerate maximum of $\varphi$ with $x_1>0$, then $x_3=0$.
\end{lemma}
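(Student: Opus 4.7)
Suppose for contradiction that $x_3 > 0$. Combined with the hypothesis $x_1 > 0$ and the non-degeneracy of $(\mathbf{x}, \mathbf{y})$, this forces $y_1, x_2, y_2 > 0$ as well. My plan is to exhibit a perturbation of $(\mathbf{x}, \mathbf{y})$ in $W_k$ that strictly increases $\varphi$, contradicting maximality. By Lemma~\ref{lemma_equality} we have $a \cdot p(\mathbf{x}, \mathbf{y}) = b \cdot q(\mathbf{x}, \mathbf{y})$, so such a perturbation must strictly increase both $p$ and $q$ simultaneously.

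I first consider the two-parameter family
\[
(x_2', x_3', y_1', y_2') = (x_2 + t,\ x_3 - t,\ y_1 + u,\ y_2 - u)
\]
for $t, u > 0$, with all other coordinates fixed. Writing $X_i = \sum_{j \geq i} x_j$ and $Y_i = \sum_{j \geq i} y_j$, a direct differentiation of the formulas for $p$ and $q$ gives
\[
\partial_t p = -s(s-1)\, y_2 X_3^{s-2}, \qquad \partial_u p = s\bigl(X_2^{s-1} - X_3^{s-1}\bigr),
\]
\[
\partial_t q = r\bigl(Y_2^{r-1} - Y_3^{r-1}\bigr), \qquad \partial_u q = -r(r-1)\, x_2 Y_2^{r-2}.
\]
Both linear forms $t\, \partial_t + u\, \partial_u$ are strictly positive for some $(t, u) \in \mathbb{R}_+^2$ precisely when
\[
\bigl(X_2^{s-1} - X_3^{s-1}\bigr)\bigl(Y_2^{r-1} - Y_3^{r-1}\bigr) < (s-1)(r-1)\, x_2 y_2 X_3^{s-2} Y_2^{r-2}. \qquad (\ast)
\]
By the mean value theorem applied to $\tau \mapsto \tau^{s-1}$ and $\tau \mapsto \tau^{r-1}$, inequality $(\ast)$ is equivalent to $\tilde X^{s-2} \tilde Y^{r-2} < X_3^{s-2} Y_2^{r-2}$ for some $\tilde X \in (X_3, X_2)$ and $\tilde Y \in (Y_3, Y_2)$. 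Since $\tilde X > X_3$ but $\tilde Y < Y_2$, the two factors move in opposite directions, making $(\ast)$ a nontrivial comparison that must be extracted from the structural information provided by the maximality of $(\mathbf{x}, \mathbf{y})$.

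The main obstacle I expect to face is that $(\ast)$ holds with \emph{equality} at every interior Lagrangian critical point of $\varphi$ — which the assumed maximum necessarily is — so first-order analysis alone cannot close the argument. The proof should therefore proceed in one of two ways. Option (i): combine the perturbation above with a complementary one (for instance, the shift $(x_1', x_3') = (x_1 + t', x_3 - t')$ paired with a compensating $y$-shift), which yields a second equality analogous to $(\ast)$; comparing these two equalities, together with the global constraint $ap = bq$, should overdetermine the configuration and force a contradiction with $x_3 > 0$. Option (ii): fix $u = Rt$ so that the first-order variation of $\min(ap, bq)$ vanishes along the null direction, and expand $p(t)$ and $q(t)$ to second order. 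The explicit appearance of $x_3 > 0$ in the Hessian contributions (through the coefficients of $t^2$ in the Taylor expansions, where $x_3$ enters the cross-products of the monomials defining $p$ and $q$) should produce a direction in the two-dimensional null cone along which both $\ddot p$ and $\ddot q$ are strictly positive. This quadratic-order simultaneous increase of $p$ and $q$ gives $\varphi > \varphi(\mathbf{x}, \mathbf{y})$ for small perturbations, contradicting local maximality and establishing $x_3 = 0$.
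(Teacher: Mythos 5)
Your setup correctly identifies the structure of the problem — at a non-degenerate maximum with $x_1, x_3 > 0$, one seeks a perturbation within $W_k$ increasing both $p$ and $q$, first-order analysis alone typically fails, and the degenerate case must be handled at second order — but neither of your two proposed closings is actually carried out, and Option~(ii) in particular does not go through with the perturbation you chose. Concretely: your perturbation $(x_2+t, x_3-t, y_1+u, y_2-u)$ moves both $\mathbf{x}$ and $\mathbf{y}$, so \emph{both} $p$ and $q$ have nontrivial Hessians in $(t,u)$. In the degenerate case (gradient determinant zero) the null direction is a single line, not a two-dimensional cone; along that line one computes
\[
\partial_t^2 q = 0,\qquad \partial_{tu} q = -r(r-1)Y_2^{r-2},\qquad \partial_u^2 q = r(r-1)(r-2)\, x_2\, Y_2^{r-3},
\]
and since the null direction $(t_0,u_0)$ has $t_0 u_0>0$, the cross-term $2t_0u_0\,\partial_{tu}q$ is strictly negative; whether the quadratic form $H_q(t_0,u_0)$ is positive hinges on a nontrivial comparison between $(r-2)x_2 u_0$ and $2t_0 Y_2$ that you do not establish. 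So the assertion that ``both $\ddot p$ and $\ddot q$ are strictly positive'' is not justified and may in fact be false.

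The paper avoids this entirely by a different choice of perturbation: it moves \emph{only} $x_1,x_2,x_3$ (keeping $\mathbf{y}$ fixed, with the sum constraint imposed via the first row of a $3\times 3$ matrix $\mathbf{A}$). The decisive structural fact is then that $q$ is \emph{affine} in $\mathbf{x}$ for fixed $\mathbf{y}$, so $\partial^2 q/\partial x_l\partial x_m \equiv 0$. Meanwhile the Hessian $\mathbf{B}$ of $p$ in $(x_1,x_2,x_3)$ has the nested form with entries $\partial^2 p/\partial x_l\partial x_m = \partial^2 p/\partial x_{\min(l,m)}^2$ and diagonal strictly increasing (using $x_3>0$, hence $y_1,y_2>0$), which forces $\mathbf{B}$ to be positive definite. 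Thus when $\mathbf{A}$ is singular, moving along a kernel direction increases $p$ strictly at second order while leaving $q$ \emph{exactly} unchanged — yielding a new maximum with $ap>bq$, contradicting Lemma~\ref{lemma_equality}. Your proposal misses both of these ideas: the choice of an $\mathbf{x}$-only perturbation that linearizes $q$, and the positive-definiteness of the nested Hessian of $p$. Without them, the degenerate case is genuinely unresolved in your argument.
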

\begin{proof}
Suppose, that $x_3 > 0$ and let $1 \le l \le m \le k$. Then
\begin{align*}
\frac{\partial p}{\partial x_l} &= s \cdot \left(\sum_{i=1}^{k} x_i\right)^{s-1} + s(s-1) \cdot \sum_{i = 1}^{l-1} \left[y_i\cdot \left(\sum_{j=i + 1}^k x_j\right)^{s-2}\right], \\
\frac{\partial q}{\partial x_l} &= r \cdot \left(\sum_{j=l}^k y_j\right)^{r-1},
\end{align*}
and
\begin{align*}
\frac{\partial^2 p}{\partial x_l \partial x_m} &= s(s-1)\cdot \left(\sum_{i=1}^{k} x_i\right)^{s-2} + s(s-1)(s-2) \cdot \sum_{i = 1}^{l-1} \left[y_i\cdot \left(\sum_{j=i + 1}^k x_j\right)^{s-3}\right], \\
\frac{\partial^2 q}{\partial x_l \partial x_m} &\equiv 0.
\end{align*}
Clearly $\frac{\partial^2 p}{\partial x_l \partial x_m} = \frac{\partial^2 p}{\partial x_l^2}$, for $l \le m$.
We define two matrices $\mathbf{A}$ and
$\mathbf{B}$ as following.
\[
\mathbf{A} = \begin{bmatrix}
1 & 1 & 1 \\
\frac{\partial p}{\partial x_1} & \frac{\partial p}{\partial x_2} & \frac{\partial p}{\partial x_3} \\
\frac{\partial q}{\partial x_1} & \frac{\partial q}{\partial x_2} & \frac{\partial q}{\partial x_3} \\
\end{bmatrix}, \quad
\mathbf{B} = \begin{bmatrix}
\frac{\partial^2 p}{\partial x_1^2} & \frac{\partial^2 p}{\partial x_1\partial x_2} & \frac{\partial^2 p}{\partial x_1\partial x_3} \\
\frac{\partial^2 p}{\partial x_1\partial x_2} & \frac{\partial^2 p}{\partial x_2^2} & \frac{\partial^2 p}{\partial x_2\partial x_3} \\
\frac{\partial^2 p}{\partial x_1\partial x_3} & \frac{\partial^2 p}{\partial x_2\partial x_3} & \frac{\partial^2 p}{\partial x_3^2} \\
\end{bmatrix}=\begin{bmatrix}
\frac{\partial^2 p}{\partial x_1^2} & \frac{\partial^2 p}{\partial x_1^2} & \frac{\partial^2 p}{\partial x_1^2} \\
\frac{\partial^2 p}{\partial x_1^2} & \frac{\partial^2 p}{\partial x_2^2} & \frac{\partial^2 p}{\partial x_2^2} \\
\frac{\partial^2 p}{\partial x_1^2} & \frac{\partial^2 p}{\partial x_2^2} & \frac{\partial^2 p}{\partial x_3^2} \\
\end{bmatrix}.
\]
It is easy to see that if $(\mathbf{x},\mathbf{y})$ is non-degenerate with $x_3>0$, then
$\frac{\partial^2 p}{\partial x_3^2}>\frac{\partial^2 p}{\partial x_2^2}>\frac{\partial^2 p}{\partial x_1^2}>0$. This implies that
$\mathbf{B}$ is positive definite.

For a vector $\mathbf{v}\in\mathbb{R}^3$ and $\epsilon>0$, we define $\mathbf{x'}$ by
\[
x_i'= \left\{\begin{array}{ll}
x_i + \epsilon v_i & \text{ if } i \le 3, \\
x_i & \text{ if } i > 3, \\
\end{array}\right.
\]

If $\mathbf{A}$ is invertible, let $\mathbf{v}$ be the (unique) vector for which
\[
\mathbf{A} \cdot \mathbf{v}^{T} = \begin{bmatrix}
0 \\
1 \\
1 \\
\end{bmatrix}.
\]
In particular $\sum_i x_i' = \sum_i x_i$. For $\epsilon$ sufficiently small,
\[
p(\mathbf{x}', \mathbf{y}) = p(\mathbf{x},\mathbf{y}) + \epsilon + O(\epsilon^2)>p(\mathbf{x}, \mathbf{y})\]\[
q(\mathbf{x}', \mathbf{y}) = q(\mathbf{x}, \mathbf{y}) + \epsilon> q(\mathbf{x}, \mathbf{y})
\]
contrary to the maximality of $(\mathbf{x},\mathbf{y})$.

If $\mathbf{A}$ is singular, pick some $\mathbf{v} \neq 0$ with $\mathbf{A} \cdot \mathbf{v}^{T} = \mathbf{0}$. Again $\sum_i x_i' = \sum_i x_i$. Since $\mathbf{B}$ is positive definite, for a sufficiently small $\epsilon$,
\[
p(\mathbf{x}', \mathbf{y}) = p(\mathbf{x},\mathbf{y})
+ \frac{\epsilon^2}{2}\cdot \mathbf{v} \cdot \mathbf{B} \cdot \mathbf{v}^{T} + O(\epsilon^3)>p(\mathbf{x},\mathbf{y}) \]\[
q(\mathbf{x}', \mathbf{y}) = q(\mathbf{x}, \mathbf{y}),
\]
Contradicting Lemma \ref{lemma_equality}.
\end{proof}

\begin{lemma}
\label{lemma_small1}
If $(\mathbf{x},\mathbf{y}) \in W_k$ is a non-degenerate maximum of $\varphi$ with $x_1> 0$, then $y_2=0$.
\end{lemma}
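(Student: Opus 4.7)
My plan is to mirror the structure of Lemma~\ref{lemma_small0} as closely as possible. By Lemma~\ref{lemma_small0} combined with non-degeneracy, the only variables that can be nonzero are $x_1,y_1,x_2,y_2$, and the hypothesis $y_2>0$ together with non-degeneracy forces $y_1,x_2>0$ as well. First I would pick three perturbation variables that include $y_2$; the natural choice, by analogy with the clique triple in Lemma~\ref{lemma_small0}, is $(y_1,x_2,y_2)$, holding $x_1$ fixed. Form the $3\times 3$ matrix
\[
\mathbf{A}=\begin{bmatrix}
1 & 1 & 1 \\
\partial p/\partial y_1 & \partial p/\partial x_2 & \partial p/\partial y_2 \\
\partial q/\partial y_1 & \partial q/\partial x_2 & \partial q/\partial y_2
\end{bmatrix},
\]
together with a suitable Hessian matrix $\mathbf{B}$ of $p$ or $q$ in these variables.

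The case split would then proceed exactly as in Lemma~\ref{lemma_small0}. If $\mathbf{A}$ is invertible, I would solve $\mathbf{A}\mathbf{v}^{T}=(0,1,1)^{T}$; the perturbation $\epsilon\mathbf{v}$ preserves the sum constraint, stays in $W_k$ for small $\epsilon>0$, and increases both $ap$ and $bq$ by $\epsilon$ at first order, contradicting maximality. If $\mathbf{A}$ is singular, pick a nonzero $\mathbf{v}\in\ker\mathbf{A}$; the first-order changes in both $p$ and $q$ vanish, and the second-order changes are $\tfrac{\epsilon^2}{2}\mathbf{v}^{T}H_p\mathbf{v}$ and $\tfrac{\epsilon^2}{2}\mathbf{v}^{T}H_q\mathbf{v}$. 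The goal is to establish that both are strictly positive, which would again contradict maximality.

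The main obstacle is exactly this last step. Unlike Lemma~\ref{lemma_small0}, where the Hessian $H_p$ restricted to $(x_1,x_2,x_3)$ had a neat nested positive-definite structure, here neither $H_p$ nor $H_q$ is positive definite on $(y_1,x_2,y_2)$: the $y_2$-row of $H_p$ vanishes because $p$ is constant in $y_2$, and the $x_2$-row of $H_q$ vanishes because $q$ is linear in $x_2$. To force positivity I plan to exploit the rank-two dependency of the rows of $\mathbf{A}$ itself: in the singular case, writing $\nabla q=-c_{1}\mathbf{1}-c_{2}\nabla p$ and matching components (with $X=x_1+x_2$ and $Y=y_1+y_2$) forces algebraic identities of the shape
\[
(r-1)(s-1)\,y_1 y_2^{r-2}=Y^{r-1}-y_2^{r-1},\qquad x_1 x_2^{s-2}Y^{r-2}=y_2^{r-2}\bigl(X^{s-1}-x_2^{s-1}\bigr),
\]
together with a positivity relation $c_2>0$ analogous to the one used in Lemma~\ref{lemma_equality}. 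Substituting these into the explicit formula for $\mathbf{v}$ and into $\mathbf{v}^{T}H_p\mathbf{v}$ and $\mathbf{v}^{T}H_q\mathbf{v}$ should then yield the needed simultaneous positivity. If the three-variable setup proves too rigid, a backup is to enlarge the perturbation space to all four variables $\{x_1,x_2,y_1,y_2\}$, where $\ker\mathbf{A}$ becomes two-dimensional in the singular case and one has enough freedom to choose a direction along which both quadratic forms are simultaneously positive. The hardest step is this explicit algebraic verification of simultaneous positivity; once it is in hand, the rest of the argument follows the Taylor-expansion/contradiction pattern of Lemma~\ref{lemma_small0}.
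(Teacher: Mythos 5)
Your plan diverges from the paper in the singular case, and the divergence hides a genuine gap. In the non-singular case your $3\times 3$ matrix argument is fine (and parallels the paper's $2\times 2$ matrix of difference-quotients). But when $\mathbf{A}$ is singular, your hope that some $\mathbf{v}\in\ker\mathbf{A}$ makes both $\mathbf{v}^{T}H_{p}\mathbf{v}$ and $\mathbf{v}^{T}H_{q}\mathbf{v}$ strictly positive does not survive a direct check. In the variables $(y_1,x_2,y_2)$ one has $\partial p/\partial y_2 \equiv 0$, so the equation $\nabla p\cdot\mathbf{v}=0$ forces
$v_1 = -\frac{X^{s-1}+(s-1)y_1 x_2^{s-2}}{x_2^{s-1}}\,v_2$
where $X=x_1+x_2>x_2$. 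Hence $v_1$ and $v_2$ have opposite signs with $|v_1|>|v_2|$, and since the only nonzero entries of $H_p$ in these coordinates are $\partial^2 p/\partial x_2^2$ and $\partial^2 p/\partial y_1\partial x_2$, one computes
$\mathbf{v}^{T}H_{p}\mathbf{v} = v_2^2\Bigl[s(s-1)X^{s-2}\bigl(1-\tfrac{2X}{x_2}\bigr) - s^2(s-1) y_1 x_2^{s-3}\Bigr] < 0.$
So the quadratic form of $p$ is \emph{strictly negative} in the kernel direction; your second-order Taylor argument collapses rather than contradicts maximality. The four-variable backup plan is not obviously salvageable either: $H_p$ vanishes on the $y$-coordinates and $H_q$ vanishes on the $x$-coordinates, so neither form is positive semidefinite, and you would still need to exhibit and verify a direction in a two-dimensional kernel — exactly the ``hardest step'' you flag but do not carry out.

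The paper sidesteps all of this. When its $2\times 2$ matrix $\mathbf{M}$ of difference-quotients degenerates, the determinant condition pins down the ratio $\alpha=y_1/y_2$ via $(\alpha+1)^{r-1}-1=(r-1)(s-1)\alpha$. Rather than a second-order local perturbation, the paper then makes a \emph{finite} substitution that sets $y_2'=0$, carefully splitting $x_2$ between $x_1'$ and $x_2'$ so that $q$ is preserved \emph{exactly}. A calculus inequality (Lemma~\ref{lemma_ineq}) then shows $p$ strictly increases. This lands at a new non-degenerate point with the same value of $\varphi$ but with $a\,p\neq b\,q$, contradicting Lemma~\ref{lemma_equality}. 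Your proposal does not anticipate this macroscopic move or the auxiliary inequality; that is the missing idea.
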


\begin{proof}
By Lemma \ref{lemma_small0} we may assume that $x_i=y_i=0$ for all $i \ge 3$. Suppose, towards contradiction, that $y_2\ne 0$. Let
\[
\mathbf{M} = \begin{bmatrix}
a_1 & a_2 \\
b_1 & b_2
\end{bmatrix},
\]
where
\begin{align*}
a_1 = \frac{\partial p}{\partial x_1} - \frac{\partial p}{\partial x_2} &= -s(s-1)\cdot y_1 \cdot x_2^{s-2},\quad &
b_1 = \frac{\partial q}{\partial x_1} - \frac{\partial q}{\partial x_2} &= r\cdot ((y_1 + y_2)^{r-1} - y_2^{r-1}), \\
a_2 = \frac{\partial p}{\partial y_1} - \frac{\partial p}{\partial y_2} &= s\cdot x_2^{s-1}, \quad &
b_2 = \frac{\partial q}{\partial y_1} - \frac{\partial q}{\partial y_2} &= -r(r-1) \cdot x_2\cdot y_2^{r-2}, \\
\end{align*}
If $\mbox{rank}(\mathbf{M})=2$, then there is a vector $\mathbf{v}=\left(v_1 \atop v_2\right)$ such that $\mathbf{M}\cdot \mathbf{v}= \left(1 \atop 1\right)$.
Define $x'_1=x_1+\epsilon v_1, x'_2=x_2-\epsilon v_1$ and  $y'_1=y_1+\epsilon v_2, y'_2=y_2-\epsilon v_2$.
Then $x'_1+x'_2+y'_1+y'_2=1$ and for sufficiently small $\epsilon>0$
\begin {eqnarray*}
p(\mathbf{x}', \mathbf{y}') &= & p(\mathbf{x},\mathbf{y})+\epsilon\Big(\frac{\partial p}{\partial x_1}v_1-\frac{\partial p}{\partial x_2}v_1+
\frac{\partial p}{\partial y_1}v_2-\frac{\partial p}{\partial y_2}v_2 \Big)+ O(\epsilon^2)\\
&= &p(\mathbf{x},\mathbf{y})+\epsilon\big(a_1v_1+a_2v_2\big)+ O(\epsilon^2)=p(\mathbf{x},\mathbf{y})+\epsilon+ O(\epsilon^2)>p(\mathbf{x},\mathbf{y}).
\end{eqnarray*}
Similarly $q(\mathbf{x}', \mathbf{y}')=q(\mathbf{x},\mathbf{y})+\epsilon+ O(\epsilon^2)>q(\mathbf{x},\mathbf{y})$.
Thus $(\mathbf{x},\mathbf{y})$ cannot be a maximum of $\varphi$.
Hence, $\mbox{rank}(\mathbf{M})\le 1$, and in particular
\[
\det \begin{bmatrix}
a_1 & b_1 \\
a_2 & b_2
\end{bmatrix} = 0,
\]
which implies that
\[
0 = x_2^{s-1}y_2^{r-1}\left((r-1)(s-1)\frac{y_1}{y_2} - \left(\frac{y_1}{y_2}+1\right)^{r-1} + 1\right), \\
\]
The function
$$g(\alpha) = (r-1)(s-1)\alpha-(\alpha + 1)^{r-1}+1$$
is strictly concave for $\alpha > 0$ and vanishes at $0$. Since $\alpha=0$ is not a maximum of $g$, the equation $g\left(\frac{y_1}{y_2}\right)=0$ determines $\frac{y_1}{y_2}$ uniquely.

Denote $\alpha=\frac{y_1}{y_2}$, and consider the following change of variables.
\begin{align*}
x_1' &= x_1 + \frac{1}{1 + (r-1)(s-1)\alpha}\cdot x_2,\quad &
x_2' &= \frac{(r-1)(s-1)\alpha}{1 + (r-1)(s-1)\alpha} \cdot x_2\\
y_1' &= y_1 + y_2 = (\alpha + 1) y_2,\quad &
y_2' &= 0
\end{align*}
Clearly, $x_1' + x_2' = x_1 + x_2$ and $y_1' + y_2' = y_1 + y_2$. Moreover,
\begin{align*}
q(\mathbf{x}', \mathbf{y}') &= (y_1')^{r} + r\cdot x_1' \cdot (y_1')^{r-1}\\
 &= (y_1 + y_2)^r + r\cdot x_1 \cdot (y_1 + y_2)^{r-1} + \frac{r\cdot x_2 \cdot (y_1 + y_2)^{r-1}}{1 + (r-1)(s-1)\alpha} \\
 &= (y_1 + y_2)^r + r\cdot x_1 \cdot (y_1 + y_2)^{r-1} + \frac{r\cdot (1+\alpha)^{r-1} \cdot x_2 \cdot y_2^{r-1}}{(1 + \alpha)^{r-1}} = q(\mathbf{x}, \mathbf{y}) \\
p(\mathbf{x}', \mathbf{y}') &= (x_1' + x_2')^{s} + s \cdot y_1' \cdot (x_2')^{s-1} \\
 &= (x_1 + x_2)^s + s\cdot (\alpha + 1)\cdot \left(\frac{(r-1)(s-1)\alpha}{1 + (r-1)(s-1)\alpha}\right)^{s-1}\cdot y_2 \cdot x_2^{s-1} \\
 &> (x_1 + x_2)^s + s\cdot \alpha \cdot y_2 \cdot x_2^{s-1} = p(\mathbf{x}, \mathbf{y}),
\end{align*}
Where the last inequality is a consequence of Lemma \ref{lemma_ineq} below. This contradicts Lemma \ref{lemma_equality}.
\end{proof}

\begin{lemma}
\label{lemma_ineq}
Let $r,s \ge 3$ be integers. Let $\alpha > 0$ be the unique positive root of
\[
(\alpha + 1)^{r-1} - 1 = (r-1)(s-1)\alpha.
\]
Then
\[
\left(1 + \frac{1}{(r-1)(s-1)\alpha}\right)^{s-1} < 1 + \frac{1}{\alpha}.
\]
\end{lemma}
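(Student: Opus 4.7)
The plan is to take logarithms and pass to integral representations, reducing the problem to a clean inequality about $\alpha$. Set $m := r-1$ and $n := s-1$, both at least $2$; the hypothesis reads $(1+\alpha)^m = 1 + mn\alpha$. Taking $\log$ of the target inequality, it is equivalent to $n \log(1 + \tfrac{1}{mn\alpha}) < \log(1 + \tfrac{1}{\alpha})$. Writing $\log(1+z) = \int_0^z \tfrac{dt}{1+t}$ and rescaling the variable of integration ($u = mn\alpha\, t$ on the left, $u = \alpha\, t$ on the right) turns this into
\[
\int_0^1 \frac{du}{m\alpha + u/n} \;<\; \int_0^1 \frac{du}{\alpha + u}.
\]

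I would then prove this via a pointwise comparison of integrands: the inequality $m\alpha + u/n > \alpha + u$ rearranges to $(m-1)\alpha > (n-1)u/n$, whose worst case on $[0,1]$ is $u = 1$, so everything reduces to the clean lower bound
\[
\alpha > \alpha_0 := \frac{n-1}{n(m-1)}.
\]
To extract this from the defining equation, set $F(\alpha) := (1+\alpha)^m - 1 - mn\alpha$. Since $F$ is strictly convex on $[0,\infty)$ with $F(0) = F(\alpha) = 0$, it is strictly negative throughout $(0,\alpha)$; hence $\alpha_0 < \alpha$ is equivalent to $F(\alpha_0) < 0$. A short simplification using $1 + mn\alpha_0 = (mn-1)/(m-1)$ and $1 + \alpha_0 = (mn-1)/(n(m-1))$ shows
\[
F(\alpha_0) < 0 \;\iff\; \Bigl(1 + \tfrac{n-1}{n(m-1)}\Bigr)^{m-1} < n,
\]
and I would finish by applying $(1+z)^k \le e^{kz}$ to bound the left side by $e^{(n-1)/n}$, and then checking $e^{1/2} < 2$ for the case $n = 2$ and $e < n$ for $n \ge 3$.

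The principal obstacle is this last step: teasing out the \emph{sharp} lower bound $\alpha > (n-1)/(n(m-1))$ from the implicit constraint. Naive attempts such as Bernoulli applied directly to $(1+\alpha)^m$ or Taylor expansion of $(1 + \tfrac{1}{mn\alpha})^n$ either point in the wrong direction or are too loose when $\alpha$ is small. The saving grace is that the integral reformulation tells one \emph{exactly} which lower bound on $\alpha$ is required, and then convexity of $F$ together with the elementary estimate $e^{(n-1)/n} < n$ hits that target with essentially no slack.
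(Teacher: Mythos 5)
Your proof is correct, and it takes a genuinely different route from the paper's. The paper argues directly: it first proves $(r-1)\alpha > 1$ by contradiction (combining $e^t < 1+2t$ for $t \in (0,1]$ with $e \ge (1+\alpha)^{1/\alpha}$), then bounds the left side by $e^{1/((r-1)\alpha)}$ via $1+x < e^x$, and finally shows $e^{1/((r-1)\alpha)} \le 1 + 1/\alpha$ by applying Bernoulli to $(1 + 1/\alpha)^{(r-1)\alpha} > r \ge 3 > e$. You instead take logarithms, rewrite both sides as integrals, rescale to a common interval, and reduce the whole problem to a pointwise comparison of integrands, which identifies the precise threshold $\alpha > \alpha_0 = \tfrac{s-2}{(s-1)(r-2)}$; you then prove this lower bound using strict convexity of $F(\alpha) = (1+\alpha)^{r-1} - 1 - (r-1)(s-1)\alpha$ and the bound $e^{(s-2)/(s-1)} < s-1$. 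The integral reformulation is the genuinely new idea: it tells you exactly what lower bound on $\alpha$ is needed, whereas the paper's bound $\alpha > 1/(r-1)$ is obtained somewhat ad hoc and is neither uniformly stronger nor weaker than your $\alpha_0$ (it dominates when $s \le r$ but not when $s > r$). Both approaches ultimately lean on $1+x < e^x$, but yours makes the "what must we prove about $\alpha$" question transparent, at the modest cost of the change of variables and the $F(\alpha_0)<0$ simplification. One small presentational note: when you assert $F < 0$ on $(0,\alpha)$, you should state explicitly that $F$ is strictly convex with $F(0)=F(\alpha)=0$ and that $\alpha$ is indeed the unique positive zero (which follows since $F'(0) = (r-1)(1-(s-1)) < 0$ and $F \to \infty$), so the interval $(0,\alpha)$ is exactly where $F$ dips below the chord joining its two zeros; you gesture at this but it deserves a full sentence.
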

\begin{proof}
First, we show that $(r-1) \alpha > 1$. Let $t = (r-1)\alpha$ and assume, by contradiction, that $t \le 1$. For $0 < t\le 1$, we have $e^t < 1 + 2t$. On the other hand,
$e \ge (1+\alpha)^{1/\alpha}$, implying $e^t \ge \left( 1 + \alpha\right)^{t / \alpha} = (1 + \alpha)^{r-1}$.
Thus we have $2t > (1+\alpha)^{r-1}-1 = (r-1)(s-1)\alpha = (s-1)t$, which implies $2 > s-1$, a contradiction. Therefore $(r-1)\alpha > 1$. Also, since
$1+x<e^x$ for all $x>0$, we have that $\big(1 + \frac{1}{(r-1)(s-1)\alpha}\big)^{s-1} < e^{\frac{1}{(r-1)\alpha}}$.
So it suffices to show that $e^{\frac{1}{(r-1)\alpha}} \le 1 + \frac{1}{\alpha}$. But since $(r-1)\alpha > 1$, we have
\[
\left(1 + \frac{1}{\alpha}\right)^{(r-1)\alpha} > 1 + \frac{(r-1)\alpha}{\alpha} = r \ge 3 > e,
\]
which finishes the proof of the lemma.
\end{proof}

\begin{lemma}
\label{lemma_small2}
If $(\mathbf{x},\mathbf{y}) \in W_k$ is a non-degenerate maximum of $\varphi$ with $x_1>0$, then $x_2=0$.
\end{lemma}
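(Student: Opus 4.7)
The plan is to mirror the arguments of Lemmas~\ref{lemma_small0} and \ref{lemma_small1}. By those lemmas we may assume $x_i=y_i=0$ for all $i\ge 3$, so only $x_1,x_2,y_1$ can be nonzero, with $x_1,y_1>0$ and $x_1+x_2+y_1=1$. Suppose for contradiction that $x_2>0$; by Lemma~\ref{lemma_equality}, the maximum satisfies $ap=bq=V$.

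First I derive a scalar first-order condition analogous to the rank conditions in the preceding two lemmas. On the two-dimensional tangent space $\{v_1+v_2+v_3=0\}$ to the simplex, the tangential projections of $\nabla p$ and $\nabla q$ must be anti-parallel at the maximum; otherwise some tangent direction $\mathbf{v}$ would satisfy both $\mathbf{v}\cdot\nabla p>0$ and $\mathbf{v}\cdot\nabla q>0$, and perturbing along $\mathbf{v}$ would strictly increase both $p$ and $q$, and hence $\varphi$. Computing the corresponding $2\times 2$ determinant and setting it equal to zero yields
\[
(x_1+x_2)^{s-1}-x_2^{s-1}=(r-1)(s-1)\,x_1\,x_2^{s-2}, \qquad (\ast)
\]
which uniquely determines $\beta:=x_2/(x_1+x_2)\in(0,1)$ in terms of $r$ and $s$.

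Next, construct a point on the $Q$-graph curve that preserves $q$. Let $f(t):=(1-t)^{r-1}(1+(r-1)t)$---this is $d(\overline{K}_r)$ for the graph $Q_{n,tn}$---and note that $f$ is strictly decreasing from $f(0)=1$ to $f(1)=0$ on $[0,1]$. The equation $f(t^*)=q$ then has a unique solution $t^*\in(0,1)$; set $(\mathbf{x}',\mathbf{y}'):=(t^*,0,1-t^*)$, which is non-degenerate and satisfies $q(\mathbf{x}',\mathbf{y}')=q$ by construction. The crux is the inequality $p(\mathbf{x}',\mathbf{y}')>p$; since $f$ is strictly decreasing, this is equivalent to $f(p^{1/s})>q$ at the critical point. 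Given that inequality, $ap'>ap=bq=bq'$, so $\varphi(\mathbf{x}',\mathbf{y}')=\min(ap',bq')=bq=V$, making $(\mathbf{x}',\mathbf{y}')$ another non-degenerate maximum of $\varphi$; Lemma~\ref{lemma_equality} applied to $(\mathbf{x}',\mathbf{y}')$ would then force $ap'=bq'$, contradicting $ap'>bq'$.

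The main technical obstacle is proving $f(p^{1/s})>q$ at the critical point. Writing $u:=x_1+x_2$ and $w:=y_1=1-u$, we have
\[
p=u^{s-1}\bigl(u+sw\beta^{s-1}\bigr), \qquad q=w^{r-1}\bigl(w+r(1-\beta)u\bigr),
\]
with $\beta\in(0,1)$ determined by $(\ast)$ independently of $u$. Both sides of the desired inequality coincide in the limits $u\to 0^+$ and $u\to 1^-$, so one must show strict positivity of the difference on the open interval $(0,1)$, making essential use of $(\ast)$. I expect this to reduce to a one-variable analytic inequality playing a role analogous to that of Lemma~\ref{lemma_ineq} in the proof of Lemma~\ref{lemma_small1}.
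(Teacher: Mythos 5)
Your structural outline is sound and your first-order condition is the same one the paper derives: the $2\times 2$ determinant condition reduces to $(1+\gamma)^{s-1}-1=(r-1)(s-1)\gamma$ with $\gamma=x_1/x_2$, which is exactly your $(\ast)$ after dividing by $x_2^{s-1}$. However, after that point you diverge from the paper, and the divergent part is left unfinished.

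The paper's proof goes the opposite way from yours: instead of preserving $q$ and trying to increase $p$, it writes down an explicit substitution that sends $x_1\mapsto 0$, $x_2\mapsto x_1+x_2$, and splits $y_1$ into $y_1',y_2'$ in the ratio $1 : (r-1)(s-1)\gamma$. A short computation using $(1+\gamma)^{s-1}=1+(r-1)(s-1)\gamma$ shows this substitution preserves $p$ \emph{exactly}, and the change in $q$ is a multiplicative factor $\tfrac{1+\gamma}{\gamma}\bigl(\tfrac{(r-1)(s-1)\gamma}{1+(r-1)(s-1)\gamma}\bigr)^{r-1}$, which is $>1$ precisely by Lemma~\ref{lemma_ineq} with $r$ and $s$ swapped. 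The target configuration is thus not a clique but an intermediate threshold graph $(0,x_1+x_2,y_1',y_2')$. The paper's choice of substitution is not an accident: it is the mirror image of the one used in the proof of Lemma~\ref{lemma_small1}, and it is engineered so that the needed inequality is \emph{exactly} the already-established Lemma~\ref{lemma_ineq}.

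Your version, by contrast, leaves the crux unproven. You correctly reduce to the inequality $f(p^{1/s})>q$ where $f(t)=(1-t)^{r-1}(1+(r-1)t)$, but you then write only that you ``expect this to reduce to a one-variable analytic inequality playing a role analogous to that of Lemma~\ref{lemma_ineq}.'' That sentence is the proof. You have also not shown how to relate your proposed inequality to Lemma~\ref{lemma_ineq} or given a replacement for it, and the observation that equality holds at the endpoints $u\to 0^+$ and $u\to 1^-$ (while correct and a useful sanity check) is not by itself a proof of strict inequality in the interior. Since $\beta$ is pinned down by $(\ast)$ but $u$ is not, you would need a genuine one-variable argument in $u$, and until that is supplied the proof has a real gap. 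A secondary stylistic point: the minimal adaptation of the paper's method (preserve the quantity whose variable you are \emph{removing}, and gain on the other one, using the first-order relation to do the bookkeeping) would have landed you directly on Lemma~\ref{lemma_ineq} and avoided inventing a new inequality.
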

\begin{proof}
This proof is very similar to the proof of Lemma \ref{lemma_small1}. Now $x_1, x_2, y_1 > 0$ and $x_1 + x_2 + y_1 = 1$. Also
\begin{align*}
p(\mathbf{x},\mathbf{y}) &= (x_1 + x_2)^s + s \cdot y_1 \cdot x_2^{s-1}, \\
q(\mathbf{x},\mathbf{y}) &= y_1^r + r\cdot x_1 \cdot y_1^{r-1}.
\end{align*}
Let
\[
\mathbf{M} = \begin{bmatrix}
a_1 & a_2 \\
b_1 & b_2
\end{bmatrix},
\]
where
\begin{align*}
a_1 = \frac{\partial p}{\partial x_1} - \frac{\partial p}{\partial x_2} &= -s(s-1) \cdot y_1 \cdot x_2^{s-2}, \quad &
b_1 = \frac{\partial q}{\partial x_1} - \frac{\partial q}{\partial x_2} &= r\cdot y_1^{r-1}, \\
a_2 = \frac{\partial p}{\partial y_1} - \frac{\partial p}{\partial x_1} &= -s\cdot ((x_1+x_2)^{s-1} - x_2^{s-1}), \quad &
b_2 = \frac{\partial q}{\partial y_1} - \frac{\partial q}{\partial x_1} &= r(r-1)\cdot x_1\cdot y_1^{r-2}, \\
\end{align*}
If $\mathbf{M}$ is nonsingular, then there is a vector $\mathbf{v}=\left(v_1 \atop v_2\right)$ such that $\mathbf{M}\cdot \mathbf{v}= \left(1 \atop 1\right)$.
Define $x'_1=x_1+\epsilon (v_1-v_2), x'_2=x_2-\epsilon v_1$ and  $y'_1=y_1+\epsilon v_2$.
Then $x'_1+x'_2+y'_1=1$ and for sufficiently small $\epsilon>0$
\begin{eqnarray*}
p(\mathbf{x}', \mathbf{y}') &=&  p(\mathbf{x},\mathbf{y})+\epsilon\Big(\frac{\partial p}{\partial x_1}(v_1-v_2)-\frac{\partial p}{\partial x_2}v_1+
\frac{\partial p}{\partial y_1}v_2\Big)+ O(\epsilon^2)\\
&=& p(\mathbf{x},\mathbf{y})+\epsilon\big(a_1v_1+a_2v_2\big)+ O(\epsilon^2)=p(\mathbf{x},\mathbf{y})+\epsilon+
O(\epsilon^2)>p(\mathbf{x},\mathbf{y}).
\end{eqnarray*}
Similarly $q(\mathbf{x}', \mathbf{y}')=q(\mathbf{x},\mathbf{y})+\epsilon+ O(\epsilon^2)>q(\mathbf{x},\mathbf{y})$ and therefore
$(\mathbf{x},\mathbf{y})$ cannot be a maximum of $\varphi$. Hence,
\[
\det \begin{bmatrix}
a_1 & b_1 \\
a_2 & b_2
\end{bmatrix} =  0,
\]
which implies
\[
0 = y_1^{r-1}x_2^{s-1}\left((r-1)\cdot (s-1) \cdot\frac{x_1}{x_2} - \left(\frac{x_1}{x_2}+1\right)^{s-1} + 1\right).
\]
Let $\gamma = \frac{x_1}{x_2} > 0$. Then $1+(r-1)(s-1)\gamma-(1+\gamma)^{s-1}=0$ and concavity of the left hand side
shows that $\gamma$ is determined uniquely by this equation. Now make the following substitution:
\begin{align*}
x_1' &= 0\\
x_2' &= x_1 + x_2 = (1 + \gamma) \cdot x_2\\
y_1' &= \frac{1}{1 + (r-1)(s-1)\gamma}\cdot y_1\\
y_2' &= \frac{(r-1)(s-1)\gamma}{1 + (r-1)(s-1)\gamma} \cdot y_1
\end{align*}
Clearly $x_1' + x_2' = x_1 + x_2$ and $y_1' + y_2' = y_1$. Since $(1+\gamma)^{s-1}=1+(r-1)(s-1)\gamma$, we have
\begin{align*}
p(\mathbf{x}', \mathbf{y}') &= (x_2')^{s} + s\cdot y_1' \cdot (x_2')^{s-1}\\
 &= (x_1 + x_2)^s + s\cdot y_1\cdot x_2^{s-1} = p(\mathbf{x},\mathbf{y})\\
q(\mathbf{x}', \mathbf{y}') &= (y_1' + y_2')^{r} + r\cdot x_2' \cdot (y_2')^{r-1} \\
 &= y_1^r + r\cdot \frac{(1+\gamma)}{\gamma}\cdot\left(\frac{(r-1)(s-1)\gamma}{1 + (r-1)(s-1)\gamma}\right)^{r-1}\cdot x_1 \cdot y_1^{r-1} \\
 &> y_1^r + r\cdot x_1 \cdot y_1^{r-1} = q(\mathbf{x}, \mathbf{y}),
\end{align*}
Where the last inequality follows from Lemma \ref{lemma_ineq}, with $r$ and $s$ switched. Again, this contradicts Lemma \ref{lemma_equality}.
\end{proof}

By combining Lemmas \ref{lemma_equality} -- \ref{lemma_small2}, we obtain a proof of Lemma \ref{lemma_mainLemma}, which states that the maximum of $\varphi$ is attained by a
non-degenerate $(\mathbf{x},\mathbf{y})$ supported only on either $x_1,y_1$ or $y_1,x_2$. In the first case, let $x_1=\alpha$ and $y_1=1-\alpha$. Then
by Lemma \ref{lemma_equality}, $a\cdot p(\mathbf{x},\mathbf{y})=a\cdot \alpha^s=b\cdot q(\mathbf{x},\mathbf{y})=b\big[(1-\alpha)^r+r\alpha(1-\alpha)^{r-1}\big]$ and
$\varphi(\mathbf{x},\mathbf{y})=a \cdot \alpha^s$. In the second case, let $y_1=\beta$ and $x_2=1-\beta$. Then
$b\cdot q(\mathbf{x},\mathbf{y})=b\cdot \beta^r=a \cdot p(\mathbf{x},\mathbf{y})=a\big[(1-\beta)^s+s(1-\beta)^{s-1}\big]$ and $\varphi(\mathbf{x},\mathbf{y})=b \cdot \beta^r$. This
shows that the maximum of $\varphi$ is $\max \{a \cdot \alpha^{s}, b \cdot \beta^{r}\}$ with $\alpha, \beta$ satisfying the above equations. In terms of the original graph, this
proves that $\varphi$ is maximized by a graph of the form $Q_{n,t}$ or $\overline Q_{n,t}$, respectively. In particular, our problem has at most two extremal configurations (in some cases a clique and the complement of a clique can give the same value of $\varphi$).

\section{Stability analysis} \label{section_stability}
In this section we discuss the proof of Theorem \ref{stabilitytheorem}.
In essentially the same way that Theorem \ref{maintheorem_unrestricted} implies Theorem \ref{maintheorem}, this theorem follows from a stability version of Theorem \ref{maintheorem_unrestricted}:

\begin{theorem}
\label{stabilitytheorem_unrestricted}
Let $r, s \ge 3$ be integers and let $a,b > 0$ be real. For every $\epsilon > 0$, there exists $\delta > 0$ and an integer $N$ such that every $n$-vertex $G$ with $n > N$ for which
$$f(G) \ge \max \{a \cdot \alpha^{s}, b \cdot \beta^{r}\} - \delta$$
is $\epsilon$-close to some graph in $\mathcal{Q}_n$. Here $f, \alpha$ and $\beta$ are as in Theorem \ref{maintheorem_unrestricted}.
\end{theorem}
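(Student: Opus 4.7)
The plan is to produce a quantitative version of the proof of Theorem \ref{maintheorem_unrestricted} and to supplement it with a compactness argument in graphon space in order to handle the shifting step.

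I would first prove a quantitative strengthening of Lemma \ref{lemma_mainLemma}: for every $\eta>0$ there exists $\delta_1>0$, uniform in $k$, such that any $(\mathbf{x},\mathbf{y})\in W_k$ satisfying $\varphi(\mathbf{x},\mathbf{y})\geq\max\varphi-\delta_1$ lies within $\ell^1$-distance $\eta$ of an extremal configuration supported on either $(x_1,y_1)$ or $(y_1,x_2)$. The perturbations constructed in Lemmas \ref{lemma_equality}--\ref{lemma_small2} already produce strict improvements of $\varphi$; each such improvement can be bounded below by a continuous function of the mass in the ``forbidden'' coordinates, via the strict positive-definiteness of the Hessian $\mathbf{B}$ of Lemma \ref{lemma_small0} and the strict concavity of the auxiliary functions $g$ appearing in Lemmas \ref{lemma_small1}--\ref{lemma_small2}. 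Uniformity in $k$ is obtained by first arguing that in any near-maximizer the tail mass $\sum_{i\geq 3}(x_i+y_i)$ must be small---otherwise the perturbation of Lemma \ref{lemma_small0} would yield a fixed improvement incompatible with near-maximality---after which the problem reduces to the compact set $W_3$, where continuity of $\varphi$ closes the argument. Since a threshold graph is determined by its block sizes, this immediately translates into an edit-distance bound of $O(\eta n^2)$ between any threshold graph $G^*$ with $f(G^*)\geq\max-\delta_1$ and the appropriate member of $\mathcal{Q}_n$.

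The main obstacle is that shifting can alter $\Theta(n^2)$ edges, so even if the shifted $G^*$ is edit-close to $\mathcal{Q}_n$ the original $G$ need not be. I would circumvent this via a compactness argument in graphon space. Assume for contradiction that there is $\epsilon_0>0$ and a sequence of $n$-vertex graphs $G_n$ with $f(G_n)\to\max\varphi$ none of which is $\epsilon_0$-close in edit distance to any graph in $\mathcal{Q}_n$. By Lov\'asz--Szegedy compactness, along a subsequence the graphons $W_{G_n}$ converge in cut distance to a graphon $W$ with $f(W)=\max\varphi$ (by continuity of induced-subgraph densities in cut distance). Approximating $W$ in cut distance by threshold-graph graphons and invoking the quantitative polynomial stability above force $W$ to agree almost everywhere with one of the two $\{0,1\}$-valued two-block step graphons that are the natural cut-distance limits of $Q_{n,\alpha n}$ and $\overline{Q}_{n,(1-\beta)n}$. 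For such $\{0,1\}$-valued step graphons with a bounded number of blocks, cut-distance convergence of graphs implies, after an appropriate vertex relabeling, edit-distance convergence to the canonical $n$-vertex realization of the target---a standard feature of $\{0,1\}$-valued step graphons, which are isolated points in the edit-distance topology. This contradicts $\epsilon_0$-farness and completes the proof.
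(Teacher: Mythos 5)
The core difficulty in this theorem is that the structural conclusion of Lemma \ref{lemma_mainLemma} applies only to \emph{threshold} graphs, whereas the stability statement must hold for arbitrary near-extremal $G$. Shifting $G$ to a threshold graph $G^*$ preserves or increases $f$, so one knows $G^*$ is close to $\mathcal{Q}_n$; but, as you yourself note, shifting can alter $\Theta(n^2)$ edges, so this says nothing directly about $G$. Your graphon-compactness device does not bridge this gap. You pass to a subsequential cut-distance limit $W$ of the unshifted graphs $G_n$ and assert that ``approximating $W$ in cut distance by threshold-graph graphons and invoking the quantitative polynomial stability above'' forces $W$ to be one of the two extremal step graphons. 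But an arbitrary graphon $W$ with $f(W)=\max\varphi$ need not be cut-close to \emph{any} threshold graphon, and the sorted/shifted version of $W$ (which \emph{is} threshold and still extremal) can be at cut distance $\Theta(1)$ from $W$. What you would actually need is the converse direction: that a graphon $W$ whose shifted version is extremal must itself already be the extremal step graphon. That is a statement about near-equality in the shifting inequality, and nothing in Lemmas \ref{lemma_equality}--\ref{lemma_small2} addresses it, since those lemmas operate entirely inside the threshold world parametrized by $W_k$.

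The paper closes exactly this gap with a different ingredient: Keevash's stability version of the Kruskal--Katona theorem. From near-extremality of $f(G)$ one deduces that the shifted graph $G_1$ is close to some $G_{\max}\in\mathcal{Q}_n$; since shifting preserves edge density, $G$ and $G_{\max}$ have nearly the same number of edges; and since $d(K_s;G)\geq d(K_s;G_{\max})-\delta/a$ while $G_{\max}$ attains Kruskal--Katona with equality, $G$ itself nearly maximizes $K_s$-density for its edge count. Keevash's stability theorem then forces $G$ to be edit-close to $G_{\max}$. This is the step your proposal is missing: some external information relating a near-extremal $G$ to a near-extremal $G_{\max}$ \emph{without} passing through shifting. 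Your first paragraph (quantitative strengthening of Lemma \ref{lemma_mainLemma} via the strictness of the perturbations) is a reasonable sketch and matches the paper's treatment of the threshold case, and the claim that cut-distance convergence to a $\{0,1\}$-valued step graphon with boundedly many blocks upgrades to $L^1$/edit-distance convergence is correct. But the central bridge from general $G$ back to $\mathcal{Q}_n$ is absent, so the argument as written does not go through.
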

\begin{proof}
If $G$ is a threshold graph, the claim follows easily from Lemma \ref{lemma_mainLemma}. Since $G$ is a threshold graph, $f(G) = \varphi(\mathbf{x},\mathbf{y})+o(1)$ for some $(\mathbf{x},\mathbf{y})\in W_k$ and some integer $k$. As this lemma shows, the continuous function $\varphi$ attains its maximum on the compact set $W_k$ at most twice, and this in points that correspond to graphs from $\mathcal{Q}_n$. Since $f(G)$ is $\delta$-close to the maximum, it follows that $(\mathbf{x},\mathbf{y})$ must be $\epsilon'$-close to at least one of the two optimal points in $W_k$. This, in turn implies $\epsilon$-proximity of the corresponding graphs.

For the general case, we use the stability version of the Kruskal-Katona theorem due to Keevash \cite{keevash}. Suppose $G$ is a large graph such that $f(G) \ge \max \{a \cdot \alpha^{s},
b \cdot \beta^{r}\} - \delta$. Let $G_1$ be the shifted graph obtained from $G$. Thus $G_1$ is a threshold graph with the same edge density as $G$, and $f(G_1) \ge f(G)$ by Corollary \ref{corollary_increase}. Pick a small $\epsilon' > 0$. We just saw that for $\delta$ sufficiently small, $G_1$ is $\epsilon'$-close to $G_{max} \in \mathcal{Q}_n$. As we know, either $G_{max}= Q_{n,t}$ or $G_{max}= \bar Q_{n,t}$ for some $0 < t \le n$. We deal with the former case, and the second case can be done similarly. Now $|d(K_2;G) - d(K_2;G_{max})| \le \epsilon'$, since $G$ and $G_1$ have the same edge density. Moreover, $d(K_s; G) \ge d(K_s;G_{max}) -\delta/a$, because $f(G) \ge f(G_{max}) - \delta$.
Since $G_{max}$ is a clique, it satisfies the Kruskal-Katona inequality with equality. Consequently $G$ has nearly the maximum possible $K_s$-density for a given number of edges. By choosing
$\epsilon'$ and $\delta$ small enough and applying Keevash's stability version of Kruskal-Katona inequality, we conclude that $G$ and $G_{max}$ are $\epsilon$-close.
\end{proof}

\section{Second proof}
\label{section_shift}
In this section we briefly present the main ingredients for an alternative approach to Theorem \ref{maintheorem}. We restrict ourselves to the case $r=s$. This proof reduces the
problem to a question in the calculus of variations. Such calculations occur often in the context of shifted graphs.

Let $G$ be a shifted graph with vertex set $[n]$ with the standard order. Then, there is some $n \ge i \ge 1$ such that $A=\{1,...,i\}$ spans a clique, whereas $B=\{i+1,...,n\}$ spans
an independent set. In addition, there is some non-increasing function $F:A\rightarrow B$ such that for every $j \in A$ the highest index neighbor of $j$ in $B$ is $F(j)$, and all vertices of $B$ up to index $F(j)$ are connected to $j$.
Let $x$ be the relative size of $A$ and $1-x$ the relative size of $B$. In this case we can express (up to a negligible error term)
\begin{eqnarray*}
d(\overline{K}_k; G)&=&{n \choose k}^{-1}\left[{(1-x)n \choose k} + \sum_{1 \leq j \leq xn} {n-F(j) \choose k-1}
\right]=(1-x)^k+\frac{k}{n}\sum_{1 \leq j \leq xn}\left(\frac{n-F(j)}{n}\right)^{k-1}\\
&=&
(1-x)^k+kx(1-x)^{k-1}\sum_{1 \leq j \leq xn} \frac{1}{nx}\left(1-\frac{F(j)-xn}{(1-x)n}\right)^{k-1}.
\end{eqnarray*}
Let $f$ be a non-increasing function $f:\I\rightarrow\I$ such that $f(t)=\frac{F(j)-xn}{(1-x)n}$ for every $\frac{j-1}{xn} \leq t \leq \frac{j}{xn}$
(Think of $f$ as a relative version of $F$ both on its domain with respect to $A$ and its codomain with respect to $B$). Then
we can express $d(\overline{K}_k; G)$ in terms of $x$ and $f$
$$d(\overline{K}_k; G)=(1-x)^k+kx(1-x)^{k-1}\int_{0}^{1}(1-f(t))^{k-1}dt=\pa{G_{x,f}}.$$
Similarly one can show that
$$d(K_k; G)=x^k+kx^{k-1}(1-x)\int_{0}^{1}(k-1)t^{k-2}f(t)dt=\pc{G_{x,f}}.$$
Note that in this notation, $x=\theta$, $f=0$ (resp. $x=1-\theta$, $f=1$) corresponds to $Q_{n, \theta \cdot n}$, (resp. $\overline Q_{n, \theta \cdot n}$).

To prove Theorem \ref{maintheorem} for the case $r=s=k$, we show that assuming $\pc{G_{x,f}}\geq \alpha$, the maximum of $\pa{G_{x,f}}$ is attained for
either $f=0$ or $f=1$. For this purpose, we prove upper bounds on the integrals.
\begin{lemma}\label{lemma_integrals}
If $f:\I\rightarrow\I$ is a non-increasing function, then
\begin{equation*}
\int_{0}^{1}(1-f(t))^{k-1}dt\leq\max\left\{ 1-\left(\int_{0}^{1}(k-1)t^{k-2}f(t)dt\right)^{\frac{1}{k-1}},\left(1-\int_{0}^{1}(k-1)t^{k-2}f(t)dt\right)^{k-1}\right\}.
\end{equation*}
\end{lemma}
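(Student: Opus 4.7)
The plan is to reduce the integral inequality to a single-variable calculus problem via a layer-cake dualization followed by a concave-minimization argument. Set $p := \int_0^1 (k-1) t^{k-2} f(t)\, dt$ and $q := \int_0^1 (1-f(t))^{k-1}\, dt$, so the goal is $q \le \max\{1 - p^{1/(k-1)}, (1-p)^{k-1}\}$.

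I would first introduce the non-increasing inverse-distribution $a : [0, 1] \to [0, 1]$, $a(s) := |\{t : f(t) > s\}|$; by Fubini, $p = \int_0^1 a(s)^{k-1}\, ds$. For $q$, I would write $(1 - f(t))^{k-1} = \Pr(S_1, \ldots, S_{k-1} \le 1 - f(t))$ for i.i.d.\ uniforms $S_i$, use $1 - S_i \ge f(t) \iff t \ge a(1 - S_i)$, and note that $\min_i(1 - S_i)$ has density $(k-1)(1 - m)^{k-2}$; this yields $1 - q = \int_0^1 (k-1)(1-m)^{k-2}\, a(m)\, dm$. Setting $y(u) := a(1 - u)^{k-1}$ produces a non-decreasing $y : [0,1] \to [0,1]$ with linear constraint $\int y = p$ and $1 - q = R[y] := \int_0^1 (k-1) u^{k-2}\, y(u)^{1/(k-1)}\, du$, which is a concave functional on the convex set $\mathcal{Y} := \{y : [0,1] \to [0,1]\ \text{non-decreasing},\ \int y = p\}$.

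Since $R$ is concave and $\mathcal{Y}$ is weak-$*$ compact convex, $\min_{\mathcal{Y}} R$ is attained at an extreme point. A standard perturbation argument (perturb the heights of two adjacent plateaus of $y$ in opposite directions, choosing the ratio so as to preserve $\int y$ while keeping monotonicity) shows the extreme points of $\mathcal{Y}$ are exactly the single-step functions $y_{\theta, b}(u) = \theta \cdot \mathbf{1}_{[b, 1]}(u)$ with $b \in [0, 1-p]$ and $\theta = p/(1-b) \in [p, 1]$; the endpoints $b = 0$ and $b = 1-p$ correspond respectively to the constant $y \equiv p$ and the indicator $\mathbf{1}_{[1-p, 1]}$. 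Plugging in, $R(b) = p^{1/(k-1)}\, (1 - b^{k-1})\, (1-b)^{-1/(k-1)}$, with $R(0) = p^{1/(k-1)}$ and $R(1-p) = 1 - (1-p)^{k-1}$, matching exactly the two expressions in the lemma. It then remains to show $R$ is unimodal on $[0, 1-p]$ with minimum at an endpoint: the sign of $R'(b)$ coincides with that of $\phi(b) := (1 - b^{k-1}) - (k-1)^2 b^{k-2}(1-b)$, and $\phi'(b) = (k-1)(k-2) b^{k-3}(k b - (k-1))$ shows $\phi$ is unimodal on $(0, 1)$ with $\phi(0) = 1$ and $\phi(1) = 0$, so $\phi$ has exactly one sign change in $(0, 1)$; hence $R$ has a single interior maximum and $R[y] \ge \min\{R(0), R(1-p)\}$, which gives the lemma.

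The main obstacle is the extreme-point reduction in the infinite-dimensional setting: the perturbation argument works directly, but making it fully rigorous requires compactness of $\mathcal{Y}$ (e.g., via Helly's selection theorem for monotone functions) together with Bauer's minimum principle. A more elementary alternative is to first approximate $f$ uniformly by step functions, reducing the whole problem to a finite-dimensional polytope where extreme points are transparent, and then pass to the limit using continuity of $p$ and $q$ in $f$.
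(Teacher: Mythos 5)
Your reduction is sound and genuinely different from the paper's: the layer-cake identity $1-q = \int_0^1 (k-1)(1-m)^{k-2}a(m)\,dm$ and the substitution $y(u)=a(1-u)^{k-1}$ turn the problem into minimizing the concave functional $R[y]$ over a convex set with a linear constraint, whereas the paper's Lemma~\ref{lemma_reduced} (reached from your setup via $g=(y/p)^{1/(k-1)}$, $B=p^{-1/(k-1)}$) minimizes a \emph{linear} functional under the \emph{nonlinear} constraint $\|g\|_{k-1}=1$. The one-variable analysis of $R(b)$ along the single-step family is also correct. But there is a genuine gap in the extreme-point characterization.

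The set $\mathcal{Y}$ has many more extreme points than the single-step functions $\theta\,\mathbf{1}_{[b,1]}$. For any $b_1 \le 1-p \le b_2$ with $b_1<b_2$, the two-step function
\[
y = \alpha\,\mathbf{1}_{[b_1,1]} + (1-\alpha)\,\mathbf{1}_{[b_2,1]},
\qquad \alpha=\frac{p-(1-b_2)}{b_2-b_1}\in(0,1),
\]
taking the values $0$, $\alpha$, $1$ on $[0,b_1)$, $[b_1,b_2)$, $[b_2,1]$, lies in $\mathcal{Y}$ and is extreme: if $y=\tfrac12(y_1+y_2)$ with $y_1,y_2\in\mathcal{Y}$, then the constraints $y_i\ge 0$ on $[0,b_1)$ and $y_i\le 1$ on $[b_2,1]$ force $y_1=y_2=y$ there, and on $(b_1,b_2)$ both $y_1$ and $y_2=2\alpha-y_1$ must be non-decreasing, hence $y_1\equiv\alpha$. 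Your ``perturb two adjacent plateaus'' argument does not apply here, because the flanking plateaus sit on the hard bounds $0$ and $1$ and cannot absorb any compensating mass. Equivalently, $\mathcal{Y}$ is the slice of the Choquet simplex generated by $\{\mathbf{1}_{[b,1]}\}_b$ by the hyperplane $\int y=p$, and its extreme points correspond to representing measures supported on at most \emph{two} values of $b$ — a two-parameter family, of which your $\theta\,\mathbf{1}_{[b,1]}$ is only the boundary slice $b_2=1$. (The same issue appears in your proposed finite-dimensional discretization: the vertices of $\{0\le y_1\le\cdots\le y_n\le 1,\ \sum y_i=np\}$ are the sequences of shape $(0,\dots,0,c,\dots,c,1,\dots,1)$.) Hence your unimodality computation for $R(b)$ proves the bound only on the $b_2=1$ slice; to complete the argument you still need to verify $R\ge\min\{p^{1/(k-1)},\,1-(1-p)^{k-1}\}$ over the full two-step family, a genuine additional two-variable analysis rather than a cosmetic patch.
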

The bounds in Lemma \ref{lemma_integrals} are tight. Equality with the first term holds for $f$ that takes only the values $1$ and $0$, and equality with the second term occurs for $f$ a constant function. Proving Theorem \ref{maintheorem} for such functions is done using rather standard (if somehow tedious) calculations. Lemma \ref{lemma_integrals} itself is reduced to the following lemma through a simple affine transformation and normalization.

What non-decreasing function in $\I$ minimizes the inner product with a given monomial?
\begin{lemma}\label{lemma_reduced}
Let $g:\I\rightarrow[0,B]$ be a non-decreasing function with $B\geq 1$ and $\|g\|_{k-1}=1$. Then
\begin{equation*}
\langle (k-1)t^{k-2},g \rangle=\int_{0}^{1}(k-1)t^{k-2}g(t)dt\geq\min\left\{B\left(1-\left(1-\frac{1}{B^{k-1}}\right)^{k-1}\right),1\right\}.
\end{equation*}
Equality with the first term holds for
\[
g(t)=\left\{
\begin{matrix}
0 & t<1-\frac{1}{B^{k-1}}\\
B & t\geq 1-\frac{1}{B^{k-1}}
\end{matrix}
\right.
\]
The second equality holds for $g=1$.
\end{lemma}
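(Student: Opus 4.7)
The plan is to recognize Lemma \ref{lemma_reduced} as the minimization of a concave functional over a compact convex set and then to invoke Bauer's minimum principle. The substitution $h = g^{k-1}$ converts the problem into proving
\[
J(h) := \int_0^1 (k-1) t^{k-2} h(t)^{1/(k-1)}\, dt \;\ge\; \min\{1, F(B)\}, \qquad F(B) := B\bigl(1-(1-B^{1-k})^{k-1}\bigr),
\]
for every $h$ in the convex set $\mathcal{C} := \{h : [0,1] \to [0, M]\text{ non-decreasing},\ \int_0^1 h\, dt = 1\}$, where $M := B^{k-1}$. Since $u \mapsto u^{1/(k-1)}$ is concave on $[0,\infty)$, the functional $J$ is concave and upper semi-continuous, and $\mathcal{C}$ is compact under the usual topologies on monotone functions (Helly-type selection plus dominated convergence). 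Bauer's minimum principle thus reduces the problem to checking the bound at extreme points of $\mathcal{C}$.

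The second step is to characterize those extreme points. I claim that every extreme point of $\mathcal{C}$ is a two-level step function $h = v_1\mathbf{1}_{[0,c]} + v_2\mathbf{1}_{[c,1]}$ with $0 \le v_1 \le v_2 \le M$, $v_1 c + v_2(1-c) = 1$, and $v_1 = 0$ or $v_2 = M$ (degenerate cases such as $h \equiv 1$ are included when one of the pieces is empty). The key point is that if $h = (h_1+h_2)/2$ with $h_1, h_2 \in \mathcal{C}$, then on any sub-interval on which $h$ is constant the $h_i$'s, being non-decreasing with constant sum, must be constant there as well; combined with $\int h_i\, dt = 1$ this forces $h_1 = h_2 = h$. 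Conversely, a three-piece function whose middle value lies strictly inside $(0, M)$ admits a one-parameter family of admissible perturbations coupling the middle value with the adjacent widths while preserving the integral constraint, hence is not extreme. Translating back via $g = h^{1/(k-1)}$, the extreme points form two one-parameter families, both connecting $g \equiv 1$ to $g = B\mathbf{1}_{[1-B^{1-k},1]}$:
\begin{align*}
\text{(E1)}\quad & g = w\,\mathbf{1}_{[c,1]}, \qquad w^{k-1}(1-c) = 1,\ \ w \in [1, B],\\
\text{(E2)}\quad & g = w_1\mathbf{1}_{[0,c]} + B\,\mathbf{1}_{[c,1]}, \qquad w_1^{k-1} c + B^{k-1}(1-c) = 1,\ \ w_1 \in [0, 1].
\end{align*}

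The final step is a one-variable minimization on each family. On (E1) the objective is $F(w) = w\bigl(1 - (1-w^{1-k})^{k-1}\bigr)$; direct computation gives $F(1) = 1$, $F'(1) = 1 > 0$, and $F(w) \sim (k-1) w^{2-k} \to 0$ as $w \to \infty$, so $F$ has no interior local minimum on $[1, B]$ and $\min_{[1, B]} F = \min\{1, F(B)\}$. On (E2) the derivative of the objective $J_2(c)$ reduces, after elementary algebra, to a positive multiple of the polynomial $\psi(u) := 1 + k(k-2) u^{k-1} - (k-1)^2 u^{k-2}$ evaluated at $u = w_1/B$; one verifies that $\psi$ has a unique critical point at $u = (k-1)/k$, where it is strictly negative for $k \ge 3$, so its only zeros on $[0,\infty)$ are a point $u^\sharp \in (0,(k-1)/k)$ and the boundary zero $u=1$. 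Consequently $J_2$ has at most an interior local maximum, its minimum is attained at an endpoint, and again equals $\min\{1, F(B)\}$. Combining (E1) and (E2) yields the lemma, with the equality cases being precisely the two corner extreme points $g \equiv 1$ and $g = B\mathbf{1}_{[1-B^{1-k},1]}$. The main obstacle is the extreme-point characterization: the ``non-decreasing functions with constant sum are constant'' direction is clean, but the non-extremality of three-piece functions requires a careful perturbation coupling step heights with step widths so as to preserve the integral constraint; once that is in place, the remainder is elementary one-variable calculus on $F$ and $\psi$.
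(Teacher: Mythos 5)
The paper omits its own proof of Lemma~\ref{lemma_reduced}, stating only that it rests on ``standard calculations and convexity arguments,'' so I cannot compare against the authors' argument; I will assess your proof on its own. Your reduction via $h = g^{k-1}$ to a concave functional $J$ on a compact convex set $\mathcal{C}$, followed by Bauer's minimum principle, is a sound framework and is plausibly what the authors had in mind. However, your characterization of the extreme points of $\mathcal{C}$ is wrong, and this is a genuine gap. Consider a three-level function
\[
h \;=\; 0\cdot\mathbf{1}_{[0,c_1]} \;+\; v\cdot\mathbf{1}_{[c_1,c_2]} \;+\; M\cdot\mathbf{1}_{[c_2,1]}, \qquad 0<c_1<c_2<1,\ 0<v<M,\ v(c_2-c_1)+M(1-c_2)=1.
\]
If $h=\lambda h_1+(1-\lambda)h_2$ with $h_i\in\mathcal{C}$, then on $[0,c_1]$ both $h_i$ vanish (they are nonnegative and average to $0$), on $[c_2,1]$ both equal $M$ (they are $\le M$ and average to $M$), and on $[c_1,c_2]$ both must be constant (a nondecreasing function whose convex combination with another nondecreasing function is constant is itself constant); the integral constraint $\int h_i=1$ then pins each middle constant to $v$, so $h_1=h_2=h$ and $h$ \emph{is} extreme. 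This entire two-parameter family (parametrized by, say, $c_1,c_2$) is missing from your list. Your argument that such $h$ are not extreme --- ``coupling the middle value with the adjacent widths while preserving the integral constraint'' --- confuses a one-parameter homotopy through $\mathcal{C}$ with a convex decomposition: shifting a breakpoint does not produce functions $h_\pm$ with $h=(h_++h_-)/2$, because the pointwise map $\epsilon\mapsto h_\epsilon(t)$ is discontinuous near the moving breakpoint.

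The gap appears fillable, but it requires an extra minimization step. Over the missing two-parameter family, if you fix $c_2$ (hence $L:=1-B^{k-1}(1-c_2)=w^{k-1}(c_2-c_1)$) and vary $c_1\in[0,\,1-B^{1-k}]$, the objective becomes $L^{1/(k-1)}\cdot(c_2^{k-1}-c_1^{k-1})(c_2-c_1)^{-1/(k-1)}+B(1-c_2^{k-1})$, and the factor $(c_2^{k-1}-c_1^{k-1})(c_2-c_1)^{-1/(k-1)}$ increases then decreases in $c_1$, so the minimum is at an endpoint: either $c_1=0$ (which is your family (E2)) or $c_1=1-B^{1-k}$ (where $w=B$ and the value is exactly $F(B)$). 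Only after this additional reduction does the problem collapse to the two one-parameter families you analyze. The remaining one-variable calculus on (E1) and (E2) is essentially correct, though on (E1) you assert rather than prove that $F$ has no interior local minimum (one needs to check that $F'$ changes sign at most once, which does hold: setting $u=w^{1-k}$ one finds $F'(w)$ has the sign of $1-(1-u)^{k-1}-(k-1)^2u(1-u)^{k-2}$, which decreases then increases in $u$ and so has a single sign change on $(0,1)$).
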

We omit the proof which is based on standard calculations and convexity arguments.

\section{Shifting in hypergraphs}\label{section_hyper}

In this section, we will discuss a possible extension of Lemma \ref{lemma_increase} to hypergraphs. Consider two set systems $\mathcal{F}_1$ and $\mathcal{F}_2$ with vertex sets $V_1$
and $V_2$ respectively. A (not necessarily induced) \textit{labeled copy of $\mathcal{F}_1$ in $\mathcal{F}_2$} is an injection $I:V_1 \to V_2$ such that $I(F) \in \mathcal{F}_2$ for
every $F \in \mathcal{F}_1$. We denote by $\Cop(\mathcal{F}_1;\mathcal{F}_2)$ the set of all labeled copies of $\mathcal{F}_1$ in $\mathcal{F}_2$ and let
$$t(\mathcal{F}_1; \mathcal{F}_2):= |\Cop(\mathcal{F}_1;\mathcal{F}_2)|.$$
Recall that a vertex $u$ \textit{dominates} vertex $v$ if $S_{v\to u}(\mathcal{F})=\mathcal{F}$.
If either $u$ dominates $v$ or $v$ dominates $u$ in a family $\mathcal{F}$, we call the pair $\{u, v\}$ \textit{stable} in $\mathcal{F}$. If every pair is stable in
$\mathcal{F}$, then we call $\mathcal{F}$ a \textit{stable set system}.
\begin{theorem}
\label{theorem_increase}
Let $\mathcal{H}$ be a stable set system and let $\mathcal{F}$ be a set system. For every two vertices $u, v$ of $\mathcal{F}$ there holds
$$t(\mathcal{H}; S_{u\to v}(\mathcal{F})) \ge t(\mathcal{H}; \mathcal{F}).$$
\end{theorem}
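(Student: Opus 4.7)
The plan is to show an orbitwise inequality between $\Cop(\mathcal{H};\mathcal{F})$ and $\Cop(\mathcal{H};\mathcal{F}')$, where $\mathcal{F}':=S_{u\to v}(\mathcal{F})$. Let $\pi$ be the transposition of $u$ and $v$ acting on $V(\mathcal{F})$; left composition yields an involution $I\mapsto \pi\circ I$ on all injections $V(\mathcal{H})\to V(\mathcal{F})$, whose orbits depend only on $I(V(\mathcal{H}))\cap\{u,v\}$. Partition them into four types: Type 1 (neither $u$ nor $v$ in the image, so the orbit is a singleton), Type 2/3 (exactly one of $u,v$ in the image, forming orbits of size two), and Type 4 (both $u$ and $v$ in the image). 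Since $\mathcal{F}$ and $\mathcal{F}'$ agree on sets that contain both or neither of $u,v$, Type 1 orbits contribute identically on both sides, and the task reduces to Types 2/3 and 4.

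For a Type 2/3 orbit $\{I,J=\pi\circ I\}$, let $w$ be the unique vertex of $V(\mathcal{H})$ whose image lies in $\{u,v\}$, and partition $\mathcal{H}$ into the edges containing $w$ and those avoiding it. Writing $a_H=[I(H)\in\mathcal{F}]$ and $b_H=[J(H)\in\mathcal{F}]$, the definitions of $S_{u\to v}$ turn the four statements ``$I,J$ belongs to $\Cop$ of $\mathcal{F},\mathcal{F}'$" into Boolean expressions in these indicators. A short calculation reduces the desired orbit inequality to the identity $A\vee B\le\bigwedge_{H\ni w}(a_H\vee b_H)$, where $A=\bigwedge_{H\ni w}a_H$ and $B=\bigwedge_{H\ni w}b_H$, which holds trivially. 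Notably, stability of $\mathcal{H}$ is not used here.

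The heart of the proof is Type 4. Set $w_u:=I^{-1}(u)$, $w_v:=I^{-1}(v)$ and classify each $H\in\mathcal{H}$ as $T$ (both or neither of $w_u,w_v$ in $H$), $U$ (only $w_u\in H$), or $V$ (only $w_v\in H$). Writing $A_X,B_X,C_X$ for $\bigwedge_{H\in X}a_H$, $\bigwedge_{H\in X}b_H$, $\bigwedge_{H\in X}(a_H\vee b_H)$, the shift rules give
\[
|\Cop(\mathcal{F})\cap \mathrm{orbit}|=A_T(A_UA_V+B_UB_V),\quad
|\Cop(\mathcal{F}')\cap\mathrm{orbit}|=A_T(A_UB_UC_V+A_VB_VC_U).
\]
This is where stability is indispensable: by assumption the pair $\{w_u,w_v\}$ is stable in $\mathcal{H}$, so at least one of $w_u,w_v$ dominates the other. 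The key computation is that if $w_u$ dominates $w_v$, then for every $H\in V\cap\mathcal{H}$ the set $H':=(H\cup\{w_u\})\setminus\{w_v\}$ lies in $U\cap\mathcal{H}$ and $J(H)=I(H')$; applying this with $I$ and $J$ interchanged yields the implications $A_U\Rightarrow B_V$ and $B_U\Rightarrow A_V$. The other direction of domination gives the mirror implications $A_V\Rightarrow B_U$ and $B_V\Rightarrow A_U$. The expected main obstacle is to verify that this is enough: a finite check over the sixteen values of $(A_U,B_U,A_V,B_V)$ shows that either available pair of implications rules out every ``bad" configuration in which the orbit's contribution drops. Summing the orbitwise inequalities over all orbits then completes the proof.
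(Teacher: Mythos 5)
Your proof is correct and takes essentially the same route as the paper: both arguments work orbit by orbit under the $u\leftrightarrow v$ swap on labeled injections, isolate the case $\{u,v\}\subseteq\mathrm{Im}(I)$ as the only place the structure of $\mathcal{H}$ matters, and there invoke stability of $\{I^{-1}(u),I^{-1}(v)\}$ in $\mathcal{H}$ to transfer constraints between the ``only $w_u$'' and ``only $w_v$'' edges. The paper packages this as an explicit orbit-preserving shift operator $\tilde S_{u\to v}$ on labeled copies (with a case analysis showing it lands in $\Cop(\mathcal{H};S_{u\to v}(\mathcal{F}))$), whereas you carry out a direct Boolean count per orbit, but the decomposition, the role of stability, and the case structure are the same.
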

\begin{corollary}
Let $G$ be an arbitrary graph and let $H$ be a threshold graph $H$. Then
$$t(H; S_{u\to v}(G)) \ge t(H; G),$$
for every two vertices $u, v$ of $G$.
\end{corollary}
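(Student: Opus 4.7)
The plan is to derive the corollary as a direct application of Theorem~\ref{theorem_increase}, taken with $\mathcal{H}$ equal to the edge family $E(H)$ of the threshold graph $H$ (viewed as a $2$-uniform hypergraph on $V(H)$) and with $\mathcal{F}$ equal to the edge family $E(G)$. With this identification, a labeled copy of the set system $E(H)$ inside $E(G)$ is precisely an injection $V(H)\to V(G)$ sending edges of $H$ to edges of $G$, i.e.\ a labeled copy of the graph $H$ in $G$ in the usual sense; and the set-system shift $S_{u\to v}$ applied to $E(G)$ coincides with the graph shift $S_{u\to v}(G)$ defined in Section~\ref{section_shifting}. So Theorem~\ref{theorem_increase} produces the desired inequality as soon as one knows that $E(H)$ is a stable set system.

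To verify this single hypothesis, I would invoke Lemma~\ref{lemma_threshold}: a graph is threshold if and only if it is shifted, so up to relabeling we may assume $V(H)=[k]$ with $H$ shifted in the natural order. This relabeling is harmless, because $t(H;G)$ and $t(H;S_{u\to v}(G))$ are both intrinsic graph invariants and do not depend on how we name the vertices of $H$. By the very definition of \emph{shifted}, for every pair $i<j$ in $[k]$ vertex $i$ dominates vertex $j$ in $E(H)$, so the pair $\{i,j\}$ is stable. Since this holds for every pair, $E(H)$ is a stable set system, as required.

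Plugging this into Theorem~\ref{theorem_increase} immediately yields $t(H;S_{u\to v}(G))\ge t(H;G)$, proving the corollary. There is no serious obstacle in the argument itself: all of the combinatorial content lives in the two ingredients we are allowed to cite, namely Lemma~\ref{lemma_threshold} (identifying threshold graphs with shifted graphs) and Theorem~\ref{theorem_increase} (monotonicity of labeled copy counts under shifting for stable hosts). The only point worth emphasizing is the translation between graphs and $2$-uniform set systems, together with the observation that the graph-theoretic shift operation agrees with the set-system shift when the family is precisely the edge-set of a graph, so that no additional bookkeeping is needed to move between the two settings.
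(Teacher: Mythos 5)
Your argument is correct and is exactly the intended one: translate the graph $H$ into a $2$-uniform set system $E(H)$, use Lemma~\ref{lemma_threshold} to deduce from thresholdness that $H$ is shifted (hence every pair of vertices satisfies the dominance relation, so $E(H)$ is a stable set system), and then apply Theorem~\ref{theorem_increase}. The paper leaves this deduction implicit, and your write-up, including the observation that stability is an intrinsic property so the relabeling is harmless, fills it in correctly.
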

\begin{proof}[Proof of Theorem \ref{theorem_increase} (sketch)]
We define a new shifting operator $\tilde{S}_{u\to v}$ for sets of labeled copies. First, for every $u,v\in V$, and a labeled copy $I:U\to V$, define $I_{u\leftrightarrow v}: U\to V$ by
\[
I_{u\leftrightarrow v}(w) = \left\{\begin{array}{ll}
I(w) & \text{ if } I(w) \ne u, v,\\
v & \text{ if } I(w) = u, \\
u & \text{ if } I(w) = v
\end{array}\right.
\]
For $\mathcal{I}$ a set of labeled copies, $I\in\mathcal{I}$, we let
\[
\tilde{S}_{u\to v}(I, \mathcal{I}) = \left\{\begin{array}{ll}
I_{u\leftrightarrow v} & \text{if } I_{u\leftrightarrow v}\not\in \mathcal{I} \text{ and } \textup{Im}(I) \cap \{u,v\} = \{u\},\\
I_{u\leftrightarrow v} & \text{if } I_{u\leftrightarrow v}\not\in \mathcal{I}, \{u,v\}\subset \textup{Im}(I), \text{ and } I^{-1}(u) \text{ dominates } I^{-1}(v) \text{ in } \mathcal{H},\\
I & \text{otherwise}.
\end{array}\right.
\]
Finally, let $\tilde{S}_{u\to v}(\mathcal{I}):= \{\tilde{S}_{u\to v}(I,\mathcal{I}) : I \in \mathcal{I}\}$.
Clearly, $|\tilde{S}_{u\to v}(\mathcal{I})| = |\mathcal{I}|$, and we prove that
$$\tilde{S}_{u\to v}(\Cop(\mathcal{H};\mathcal{F}))\subseteq \Cop(\mathcal{H};S_{u\to v}(\mathcal{F}))$$
thereby proving that $t(\mathcal{H};S_{u\to v}(\mathcal{F})) \ge t(\mathcal{H}; \mathcal{F})$. As often in shifting, the proof is done by careful case analysis which is omitted.
\end{proof}

\section{Concluding remarks}\label{section_concluding}
In this paper, we studied the relation between the densities of cliques and independent sets in a graph. We showed that if the
density of independent sets of size $r$ is fixed, the maximum density of $s$-cliques is achieved when the graph itself is either a clique on a subset of the vertices, or a
complement of a clique.
On the other hand, the problem of minimizing the clique density seems much harder and has quite different extremal graphs for various values of $r$ and $s$ (at least when
$\alpha=0$, see \cite{das-et-al, pikhurko}).

\begin{question}
Given that $d(\overline{K}_r; G) = \alpha$ for
some integer $r \ge 2$ and real $\alpha \in [0,1]$, which graphs minimize $d(K_s; G)$?
\end{question}
In particular, when $\alpha=0$ we ask for the least possible density of $s$-cliques in graphs with independence number $r-1$.
This is a fifty-year-old  question of Erd\H{o}s, which is still widely open. Das et al \cite{das-et-al}, and independently Pikhurko
\cite{pikhurko}, solved this problem for certain values of $r$ and $s$. It would be interesting if one could describe how the extremal graph changes as $\alpha$ goes from $0$ to $1$ in
these cases. As mentioned in the introduction, the problem of minimizing $d(K_s; G)$ in graphs with fixed density of $r$-cliques for $r<s$
is also open and so far solved only when $r=2$.\\

\vspace{0.2cm}
\noindent
{\bf Note added in proof.}
After writing this paper, we learned that P. Frankl, M. Kato, G. Katona and N. Tokushige \cite{frankl-kato-katona-tokushige} independently considered the same problem and obtained
similar results when $r=s$.

\vspace{0.2cm}
\noindent
{\bf Acknowledgment.}
We would like to thank the anonymous referee for valuable comments and suggestions which improve the presentation of the paper.

\end{document}